\documentclass{article}
\usepackage[english]{babel}
\usepackage[latin1]{inputenc}
\usepackage{amsmath, amsfonts, amssymb,amsthm}
\usepackage{mathptmx}
\usepackage{mathtools}
\usepackage[all]{xy}
\usepackage{url}
\usepackage[backref=page]{hyperref}
\hypersetup{pdfpagemode=UseNone, pdfstartview={FitH}}
\usepackage{cleveref}
\newtheorem{theorem}{Theorem}[section]
\newtheorem{remark}{Remark}[section]
\newtheorem{problem}{Problem}[section]
\newtheorem{definition}{Definition}
\newtheorem{proposition}{Proposition}[section]
\newtheorem{lemma}{Lemma}[section] 
\newtheorem{corollary}{Corollary}[section]
\newtheorem{counter}{Counterexample}[section]
\crefname{problem}{Problem}{Problems}
\Crefname{problem}{Problem}{Problems}
\newcommand{\lag}{\left \langle}
\newcommand{\rog}{\right \rangle}
\newcommand{\df}{\mathrel{\mathop:}=}

\begin{document}
\date{}
\title{On some logical and algebraic properties of axiomatic extensions of the monoidal t-norm based logic MTL related with single chain completeness}
\author{Matteo Bianchi\\{\small Department of Mathematics ``Federigo Enriques''}\\{\small Università degli Studi di Milano}\\{\footnotesize\texttt{\href{mailto:matteo.bianchi@unimi.it}{matteo.bianchi@unimi.it}}}}
\maketitle
\begin{abstract}
In \cite{monchain} are studied, for the axiomatic extensions of the monoidal t-norm based logic (\cite{eg}), the properties of single chain completeness.

On the other side, in \cite[Chapter 5]{gjko} are studied many logical and algebraic properties (like Halldén completeness, variable separation properties, amalgamation property etc.), in the context of substructural logics. The aim of this paper is twofold: first of all we will specialize the properties studied in \cite[Chapter 5]{gjko} from the case of substructural logics to the one of extensions of MTL, by obtaining some general characterization. Moreover we will show that some of these properties are indeed strictly connected to the topics developed in \cite{monchain}. This will help to have a better intuition concerning some open problems of \cite{monchain}.
\end{abstract}
\section{Introduction}
Monoidal t-norm based logic (MTL) was introduced in \cite{eg} as the basis of a logical framework of many-valued logics initially introduced by Petr Hájek in \cite{haj}.

MTL and its extensions (logics obtained from it by adding other axioms) are all algebraizable in the sense of \cite{bp} and their corresponding classes of algebras form an algebraic variety (see \cite{nog,dist}). Given an axiomatic extension L of MTL one can study the completeness properties of L with respect to some classes of L-algebras: for example the class of all L-algebras, the one of all L-chains, the one of t-norm based L-algebras (if any). As shown in \cite{haj,eg} every extension L of MTL is strongly complete w.r.t. the class of all L-chains. Why is it important to find completeness results, for a logic, with respect to a class of totally ordered algebras? As pointed out by Petr Hájek in the introduction of his book \cite{haj} one of the desirable characteristic of his framework of many-valued logics is the comparative notion of truth: that is, sentences may be compared according to their truth values. So, if we agree with this point of view, then we must necessarily deal with totally ordered algebras, since the ``truth values'' must be comparable. However, the class of all L-chains is very large and we do not have a ``candidate" algebraic structure in which we can evaluate the truth values of formulas.

This problem can be overcome when the logic is complete with respect to a totally ordered algebra: in this case we say that this logic is single chain complete. The article \cite{monchain} presents a systematic study of completeness properties of this type, for the axiomatic extensions of MTL: many problems, however, remain open. 

The book \cite{gjko}, instead, is a reference monograph concerning residuated lattices as well as the associated substructural logics: in Chapter 5 of this book, many logical and algebraic properties are studied (like disjunction properties, Halldén completeness, deductive Maksimova variable separation properties, pseudo-relevance properties, amalgamation and interpolation properties), by showing many interesting equivalent characterizations of them. 

In this paper we specialize many properties of \cite[Chapter 5]{gjko} to the case of MTL logic and its extensions, by showing that most of them are indeed connected with the single chain completeness results of \cite{monchain}. We will conclude the paper by discussing some open problems.
\section{Preliminaries}
\subsection{Syntax}
Monoidal t-norm based logic (MTL) was introduced in \cite{eg}: it is based over connectives $\{\&, \land, \to, \bot\}$ (the first three are binary, whilst the last one is $0$-ary), and a denumerable set of variables. The notion of formula is defined inductively in the usual way. 

Useful derived connectives are the following
\begin{align}
\tag{negation}\neg\varphi\df& \varphi\to\bot\\
\tag{disjunction}\varphi\vee\psi\df& ((\varphi\to\psi)\to\psi)\land((\psi\to\varphi)\to\varphi)
\end{align}
For reader's convenience we list the axioms of MTL
\begin{align}
\tag{A1}&(\varphi \rightarrow \psi)\rightarrow ((\psi\rightarrow \chi)\rightarrow(\varphi\rightarrow \chi))\\
\tag{A2}&(\varphi\&\psi)\rightarrow \varphi\\
\tag{A3}&(\varphi\&\psi)\rightarrow(\psi\&\varphi)\\
\tag{A4}&(\varphi\land\psi)\rightarrow \varphi\\
\tag{A5}&(\varphi\land\psi)\rightarrow(\psi\land\varphi)\\
\tag{A6}&(\varphi\&(\varphi\rightarrow \psi))\rightarrow (\psi\land\varphi)\\
\tag{A7a}&(\varphi\rightarrow(\psi\rightarrow\chi))\rightarrow((\varphi\&\psi)\rightarrow \chi)\\
\tag{A7b}&((\varphi\&\psi)\rightarrow \chi)\rightarrow(\varphi\rightarrow(\psi\rightarrow\chi))\\
\tag{A8}&((\varphi\rightarrow\psi)\rightarrow\chi)\rightarrow(((\psi\rightarrow\varphi)\rightarrow\chi)\rightarrow\chi)\\
\tag{A9}&\bot\rightarrow\varphi
\end{align}
As inference rule we have modus ponens:
\begin{equation}
\tag{MP}\frac{\varphi\quad \varphi\rightarrow\psi}{\psi}
\end{equation}
A logic L is called axiomatic extension of MTL if it is obtained from this last one by adding other axioms. In particular MTL is a substructural logic and also an axiomatic extension of the logic FL$_{ew}$ (\cite{gjko,on}): indeed MTL can also be axiomatized as FL$_{ew}$ plus
\begin{equation}
\tag{prelin}(\varphi\to\psi)\vee(\psi\to\varphi).
\end{equation} 
The notions of theory, syntactic consequence, proof are defined as usual.

Let L be an axiomatic extension of MTL: for a positive integer $n$, L is called $n$-contractive whenever $\vdash_L \varphi^n\to\varphi^{n+1}$.

\noindent For the axiomatic extensions of MTL it holds the following form of deduction theorem: 
\begin{theorem}[\cite{ci}]\label{teo:ldt}
Let L be an axiomatic extension of MTL and $\Gamma,\varphi,\psi$ be a theory and two formulas. It holds that
\begin{equation*}
\Gamma\cup\{\psi\}\vdash_{L}\varphi\quad\text{iff there exists }n\in\mathbb{N}^+\text{ s.t.}\quad \Gamma\vdash_{L}\psi^n\to\varphi.
\end{equation*}
\end{theorem}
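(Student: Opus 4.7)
The statement is the familiar local deduction theorem for substructural logics over FL$_{ew}$ specialised to extensions of MTL. The plan is to prove both directions independently, with the right-to-left implication being essentially a computation and the left-to-right implication requiring induction on proof length.

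For the direction ``there exists $n$ such that $\Gamma\vdash_L\psi^n\to\varphi$ implies $\Gamma\cup\{\psi\}\vdash_L\varphi$'', the first step is to establish the derived adjunction rule $\varphi,\chi\vdash_L\varphi\&\chi$. This follows from A7b applied to the identity $(\varphi\&\chi)\to(\varphi\&\chi)$, which yields the theorem $\varphi\to(\chi\to\varphi\&\chi)$; two applications of MP then give the rule. Iterating it starting from $\psi$, one derives $\psi^n$ from $\{\psi\}$ for every $n\in\mathbb{N}^+$. From $\Gamma\vdash_L\psi^n\to\varphi$ and $\Gamma\cup\{\psi\}\vdash_L\psi^n$ one concludes $\Gamma\cup\{\psi\}\vdash_L\varphi$ by MP.

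For the converse, I would fix a proof of $\varphi$ from $\Gamma\cup\{\psi\}$ and proceed by induction on its length. In the base cases, if $\varphi$ is an axiom or lies in $\Gamma$, then $\Gamma\vdash_L\varphi$ and the weakening theorem $\varphi\to(\psi\to\varphi)$ (derivable in MTL since it extends FL$_{ew}$, via A2 and A7b) yields $\Gamma\vdash_L\psi^1\to\varphi$; if $\varphi=\psi$, the identity theorem $\psi\to\psi$ gives the same conclusion with $n=1$. For the induction step, suppose $\varphi$ is inferred by MP from $\chi$ and $\chi\to\varphi$, which by the induction hypothesis satisfy $\Gamma\vdash_L\psi^m\to\chi$ and $\Gamma\vdash_L\psi^k\to(\chi\to\varphi)$. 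Applying A7a to the second and then commuting the antecedents via exchange (A7b together with A3) produces $\Gamma\vdash_L\chi\to(\psi^k\to\varphi)$; chaining with the first formula through the transitivity theorem (A1) gives $\Gamma\vdash_L\psi^m\to(\psi^k\to\varphi)$, and a final application of A7a collapses this to $\Gamma\vdash_L\psi^{m+k}\to\varphi$, which closes the induction with $n=m+k$.

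The main obstacle is ensuring that the manipulations in the MP step are legitimate in the purely syntactic MTL calculus: one must verify that $\&$-associativity and $\&$-commutativity are available at the level of derived theorems (so that $\psi^m\&\psi^k$ and $\psi^{m+k}$ are interchangeable as antecedents), and that the residuation moves between $\alpha\to(\beta\to\gamma)$ and $(\alpha\&\beta)\to\gamma$ can be performed uniformly inside a larger implication. These facts are all standard consequences of A1, A3, A7a and A7b, but they are the part of the argument that requires the most care; once they are in hand, the induction is mechanical.
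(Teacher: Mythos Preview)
Your argument is correct and is precisely the standard syntactic proof of the local deduction theorem for logics over FL$_{ew}$: the easy direction uses iterated adjunction to pass from $\psi$ to $\psi^n$ and then MP, while the converse proceeds by induction on proof length, combining the two induction hypotheses in the MP step via residuation, exchange and associativity of $\&$ to obtain exponent $m+k$. The caveats you flag (that $\psi^m\&\psi^k$ and $\psi^{m+k}$ are provably interderivable, and that the residuation moves work inside a larger implication) are indeed the only places requiring care, and they follow from A1, A3, A7a, A7b as you say.

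Note, however, that the paper does \emph{not} supply its own proof of this theorem: it is stated with a citation to \cite{ci} and used as background. So there is no ``paper's proof'' to compare against; what you have written is essentially the proof one would find in the cited source (or in any standard treatment of the local deduction theorem for FL$_{ew}$-extensions).
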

It is called local deduction theorem, since $n$ depends on the theory and formulas considered.

\noindent For every $n$-contractive axiomatic extension of MTL we obtain the following (global) form.
\begin{theorem}[{\cite[Theorem 3.3]{hnp}}]\label{teo:conldt}
Let $L, \Gamma,\varphi,\psi$ be an $n$-contractive extension of MTL, a theory and two formulas. It holds that
\begin{equation*}
\Gamma\cup\{\psi\}\vdash_\text{L}\varphi\quad\text{iff}\quad \Gamma\vdash_\text{L}\psi^{n}\to\varphi.
\end{equation*}
\end{theorem}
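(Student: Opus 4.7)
The plan is to reduce Theorem~\ref{teo:conldt} to the local version, Theorem~\ref{teo:ldt}, using $n$-contractivity to replace the unspecified exponent $m$ by the fixed exponent $n$.

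The easy direction is $(\Leftarrow)$. Assume $\Gamma\vdash_{L}\psi^{n}\to\varphi$. First I would observe that $\{\psi\}\vdash_{L}\psi^{n}$: by (A7a) and (A7b) together with modus ponens, we derive $\psi,\psi\vdash_{L}\psi\&\psi$, and iterating this $n-1$ times gives $\{\psi\}\vdash_{L}\psi^{n}$. Combining with the assumption and modus ponens yields $\Gamma\cup\{\psi\}\vdash_{L}\varphi$.

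For $(\Rightarrow)$, suppose $\Gamma\cup\{\psi\}\vdash_{L}\varphi$. By Theorem~\ref{teo:ldt}, there is some $m\in\mathbb{N}^{+}$ with $\Gamma\vdash_{L}\psi^{m}\to\varphi$. It suffices to prove $\vdash_{L}\psi^{n}\to\psi^{m}$, since then (A1) plus modus ponens yields $\Gamma\vdash_{L}\psi^{n}\to\varphi$. I would split on whether $m\leq n$ or $m>n$. If $m\leq n$, write $\psi^{n}=\psi^{m}\&\psi^{n-m}$; from (A2), (A3), and the monotonicity of $\&$ in MTL one gets $\vdash_{L}\psi^{n}\to\psi^{m}$ directly. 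If $m>n$, use the $n$-contractivity axiom $\vdash_{L}\psi^{n}\to\psi^{n+1}$ and iterate: monotonicity of $\&$ produces $\vdash_{L}\psi^{n+k}\to\psi^{n+k+1}$ for every $k\geq 0$, and $(n-k)$-fold transitivity via (A1) gives $\vdash_{L}\psi^{n}\to\psi^{m}$.

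The main obstacle is nothing conceptually deep but rather the bookkeeping: one must verify in the MTL calculus that $\&$ is provably monotone (so that $\psi^{n}\to\psi^{n+1}$ propagates to $\psi^{n+k}\to\psi^{n+k+1}$), and that iterated transitivity via (A1) is available. Both facts are standard in any extension of MTL (indeed already in $FL_{ew}$), so the proof goes through. Once these ingredients are assembled, the two directions combine to the biconditional claimed in Theorem~\ref{teo:conldt}.
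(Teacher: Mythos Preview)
The paper does not supply its own proof of this theorem: it is merely cited from \cite[Theorem 3.3]{hnp}. So there is no in-paper argument to compare against.

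Your proof is correct and is the standard argument. One small slip: in the last line of the $(\Rightarrow)$ direction you write ``$(n-k)$-fold transitivity'' where you mean $(m-n)$-fold transitivity (chaining $\psi^{n}\to\psi^{n+1}\to\cdots\to\psi^{m}$). Otherwise the bookkeeping is fine: monotonicity of $\&$ and transitivity via (A1) are indeed available in any axiomatic extension of MTL, and your use of the local deduction theorem to obtain the exponent $m$ and then collapse it to $n$ via $n$-contractivity is exactly how this result is derived.
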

\subsection{Semantics}\label{subsec:sem}
An FL$_{ew}$-algebra is an algebra $\lag A,*,\Rightarrow,\sqcap,\sqcup,0,1\rog$ such that
\begin{enumerate}
\item $\lag A,\sqcap,\sqcup, 0,1\rog$ is a bounded lattice with minimum $0$ and maximum $1$.
\item $\lag A,*,1 \rog$ is a commutative monoid.
\item $\lag *,\Rightarrow \rog$ forms a \emph{residuated pair}: $z*x\leq y$ iff $z\leq x\Rightarrow y$ for all $x,y,z\in A$.
\footnote{Here the partial order $\leq$ is defined as $x\leq y$ iff $x\sqcap y=x$, for $x,y\in A$.}
\end{enumerate}
An MTL-algebra is an FL$_{ew}$-algebra satisfying
\begin{equation}
\tag{Prelinearity}(x\Rightarrow y)\sqcup(y\Rightarrow x)=1
\end{equation}
Finally, a totally ordered MTL-algebra is called MTL-chain.
\paragraph*{}
The notion of assignment, model and satisfiability are defined as usual: we refer to \cite{eg} for details. 

Finally, if L is an (axiomatic) extension of MTL, with $\mathbb{L}$ we will denote its corresponding variety of algebras.
\section{Algebraic and logical properties of extensions of MTL and single chain completeness results}
We begin with some definitions of properties introduced in \cite{monchain}.
\begin{definition}
Let L be an axiomatic extension of MTL. Then
\begin{itemize}
\item L enjoys the \emph{single chain completeness} (SCC) if there is an L-chain such that L is complete w.r.t. it.
\item L enjoys the \emph{strong single chain completeness} (SSCC) if there is an L-chain such that L is strongly complete w.r.t. it.
\end{itemize}
\end{definition}
\begin{remark}
The reader could note that we do not have defined the notion of finite strong single chain completeness. This is because in \cite[Theorem 3]{monchain} it is shown that this property is equivalent to single chain completeness. Hence we will deal only with this last one. 
\end{remark}
Clearly the SSCC implies the SCC. The vice-versa is left, in \cite{monchain}, as an open problem. In this section we will present some properties that are related to the ones of single chain completeness.
\begin{definition}
We say that a logic L has the \emph{disjunction property} (DP) if $\vdash_L\varphi\vee\psi$ implies that $\vdash_L\varphi$ or $\vdash_L\psi$.
\end{definition}
For example the intuitionistic logic enjoys this property: however it fails for many superintuitionistic logics (see \cite{cz} for a survey) and for classical logic (for this last one $x\vee\neg x$ is a counterexample).

For the case of axiomatic extensions of MTL, we obtain a negative result:
\begin{theorem}
Let L be a (consistent) axiomatic extension of MTL: then DP fails for L.
\end{theorem}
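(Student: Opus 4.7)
The plan is to exploit the prelinearity axiom, which forces a disjunction to be a theorem while preserving freedom in the individual disjuncts.

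First I would pick two distinct propositional variables $p$ and $q$ and observe that the formula $(p\to q)\vee(q\to p)$ is a theorem of MTL, hence of every axiomatic extension L. This is exactly the prelinearity schema that characterizes MTL over $\text{FL}_{ew}$, so $\vdash_L (p\to q)\vee(q\to p)$ is immediate.

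Next I would argue that neither disjunct is provable in a consistent L. For this I would invoke strong completeness of L with respect to the class of all L-chains (mentioned in the introduction, from \cite{haj,eg}). Since L is consistent, there must exist at least one non-trivial L-chain $A$, that is, an L-chain whose bottom $0$ and top $1$ are distinct; otherwise every formula would be provable. In $A$ consider the assignment sending $p\mapsto 1$ and $q\mapsto 0$: under residuation in a chain, $1\Rightarrow 0=0\neq 1$, so $p\to q$ is not valid in $A$ and therefore $\not\vdash_L p\to q$. Symmetrically $\not\vdash_L q\to p$.

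Combining the two observations, $(p\to q)\vee(q\to p)$ is a theorem while neither $p\to q$ nor $q\to p$ is, so DP fails. The only step requiring any care is the extraction of a non-trivial L-chain from consistency; I expect this to be the minor obstacle, but it is essentially immediate from strong chain completeness, since if every L-chain collapsed to a single point then every formula would be forced to take value $1$ in every chain and hence be provable, contradicting consistency.
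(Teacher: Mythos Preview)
Your proof is correct and uses the same core idea as the paper: the prelinearity theorem $(p\to q)\vee(q\to p)$ witnesses the failure of DP, and one refutes each disjunct semantically. The only difference is in the countermodel: the paper works in the product algebra $\mathbf{2}\times\mathbf{2}$ with a single evaluation $v(x)=\langle 0,1\rangle$, $v(y)=\langle 1,0\rangle$ that falsifies both disjuncts at once, while you use chain completeness and a single non-trivial L-chain (with two symmetric assignments). Your route is arguably the more natural one in this setting, since strong chain completeness is already on the table and the existence of a non-trivial chain follows directly from consistency; the paper's product construction buys a single simultaneous refutation but requires the (easy) observation that $\mathbf{2}$, and hence $\mathbf{2}\times\mathbf{2}$, lies in every consistent subvariety of MTL-algebras.
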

\begin{proof}
The formula $(x\to y)\vee(y \to x)$ is a theorem of L. Consider now the direct product $\mathbf{2}\times\mathbf{2}$ of two copies of two elements boolean algebra: clearly this algebra belongs to the variety of L-algebras. By taking a $\mathbf{2}\times\mathbf{2}$-evaluation $v$ such that $v(x)=\lag0,1\rog$ and $v(y)=\lag1,0\rog$, we obtain $v((x \to y)\vee(y\to x))=1$, whilst $v(x\to y)<1$ and $v(y \to x)<1$. From completeness theorem (\cite{eg}) we have $\not\vdash_L x \to y$, $\not\vdash_L y \to x$.
\end{proof}
There is a property weaker than DP: the Halldén completeness.
\begin{definition}
A logic L has the \emph{Halldén completeness} (HC) if for every formulas $\varphi,\psi$ with no variables in common, $\vdash_L\varphi\vee\psi$ implies that $\vdash_L\varphi$ or $\vdash_L\psi$.
\end{definition}
There is an interesting algebraic characterization of HC, for the extensions of FL$_{ew}$
\begin{definition}\label{def:wc}
An FL$_{ew}$-algebra is said to be \emph{well-connected} whenever for every pair of elements $x,y$, if $x\sqcup y=1$, then $x=1$ or $y=1$. 
\end{definition}
\begin{theorem}[{\cite[Theorem 5.28]{gjko}}]\label{teo:subhc}
Let L be a logic over FL$_{ew}$. The following are equivalent:
\begin{enumerate}
\item L has the Halldén completeness.
\item There is a well-connected FL$_{ew}$-algebra $\mathcal{A}$ such that L is complete w.r.t. it.
\item L is meet irreducible (in the lattice of axiomatic extensions of FL$_{ew}$).
\end{enumerate}
\end{theorem}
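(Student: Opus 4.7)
The plan is to establish the cycle $(2)\Rightarrow(1)\Rightarrow(3)\Rightarrow(2)$, of which the last implication will require the most work.

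For $(2)\Rightarrow(1)$ I would argue semantically. Let $\mathcal{A}$ be well-connected and L complete with respect to $\mathcal{A}$, and suppose $\vdash_L\varphi\vee\psi$ with $\mathrm{Var}(\varphi)\cap\mathrm{Var}(\psi)=\emptyset$. If neither disjunct is a theorem, completeness yields $\mathcal{A}$-evaluations $v_1,v_2$ with $v_1(\varphi)<1$ and $v_2(\psi)<1$; because the variable sets are disjoint I can glue $v_1$ and $v_2$ into a single evaluation $v$ coinciding with $v_i$ on the variables of the corresponding formula. Then $v(\varphi),v(\psi)<1$, while $v(\varphi\vee\psi)=v(\varphi)\sqcup v(\psi)=1$, contradicting well-connectedness of $\mathcal{A}$.

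For $(1)\Rightarrow(3)$ I would take the contrapositive. If $L=L_1\cap L_2$ with both inclusions proper, choose $\varphi_i\in L_i\setminus L$. Since axiomatic extensions of FL$_{ew}$ are closed under uniform substitution, I may rename variables so that $\mathrm{Var}(\varphi_1)\cap\mathrm{Var}(\varphi_2)=\emptyset$ without losing theoremhood. Then $\vdash_{L_i}\varphi_1\vee\varphi_2$ for each $i$, hence $\varphi_1\vee\varphi_2\in L_1\cap L_2=L$ while neither $\varphi_i$ belongs to L, contradicting HC.

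The hard direction $(3)\Rightarrow(2)$ I would split into $(3)\Rightarrow(1)$ and $(1)\Rightarrow(2)$. For $(3)\Rightarrow(1)$, again contrapositively: a failure of HC with variable-disjoint witnesses $\varphi,\psi$ produces two proper axiomatic extensions $L_1\mathrel{\mathop:}=L+\varphi$ and $L_2\mathrel{\mathop:}=L+\psi$, and the crux is to show $L_1\cap L_2\subseteq L$. The local deduction theorem over FL$_{ew}$ (analogous to Theorem~\ref{teo:ldt}) gives, for any $\chi\in L_1\cap L_2$, an integer $n$ with $\vdash_L\varphi^n\to\chi$ and $\vdash_L\psi^n\to\chi$; combined with $\vdash_L\varphi\vee\psi$ and the independence of $\mathrm{Var}(\varphi)$ and $\mathrm{Var}(\psi)$ this should yield $\vdash_L\chi$. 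For $(1)\Rightarrow(2)$, I would construct the required algebra from the free L-algebra on countably many generators via a subdirect decomposition, isolating a factor whose top element is join-irreducible; HC is exactly what should guarantee that such a factor still validates precisely L. I expect this last algebraic construction to be the main obstacle, since the naive Lindenbaum--Tarski algebra of L is not in general well-connected: its top class is prime only with respect to variable-disjoint disjunctions rather than arbitrary ones, so one must work harder to quotient away enough of the algebra without losing any theorems of L.
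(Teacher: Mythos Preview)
The paper does not prove this theorem; it is quoted from \cite[Theorem~5.28]{gjko} without argument, so there is no in-paper proof to compare against. Your outline of $(2)\Rightarrow(1)$ and $(1)\Rightarrow(3)$ is fine, but the two remaining implications both have genuine gaps.

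For $(3)\Rightarrow(1)$, the local deduction theorem does not give $\vdash_L\varphi^n\to\chi$ from $\chi\in L+\varphi$: since $L+\varphi$ is an \emph{axiomatic} extension closed under substitution, deriving $\chi$ may require arbitrary substitution instances $\sigma_i(\varphi)$, and these need no longer be variable-disjoint from the instances $\tau_j(\psi)$ used on the other side, so your appeal to disjointness at that point breaks down. The clean argument is algebraic: every subdirectly irreducible FL$_{ew}$-algebra is well-connected, so from $\vdash_L\varphi\vee\psi$ together with the variable-disjointness of $\varphi,\psi$ one gets that each SI L-algebra validates $\varphi\approx 1$ or $\psi\approx 1$; hence every SI L-algebra lies in the variety of $L_1$-algebras or in that of $L_2$-algebras, forcing $L_1\cap L_2=L$. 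For $(1)\Rightarrow(2)$, ``isolating a subdirect factor'' of the free algebra cannot work as stated: any single subdirectly irreducible quotient of $\mathbf{F}_L(\omega)$ will in general generate a proper subvariety, so its logic strictly extends L rather than equalling it. What HC actually buys is \emph{joint refutation}: given finitely many non-theorems, rename their variables disjointly (still non-theorems by structurality), apply HC to see their disjunction is a non-theorem, and refute it in a single well-connected L-algebra. One then takes an ultraproduct indexed by finite sets of non-theorems---well-connectedness is a first-order property, hence preserved---to obtain one well-connected L-algebra refuting every non-theorem. This compactness step is the missing idea in your sketch.
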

Moving to the hierarchy of MTL and its extensions, as shown in \cite[Corollaries 4.19, 4.20]{nog}, we have
\begin{proposition}
An MTL-algebra is well-connected if and only if it is a chain.
\end{proposition}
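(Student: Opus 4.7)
The plan is to prove both directions using prelinearity for one direction and the standard residuation identity for the other. Throughout, I will use the fact that in any FL$_{ew}$-algebra, $x \leq y$ is equivalent to $x \Rightarrow y = 1$; this follows immediately from the residuated pair condition applied with $z = 1$, since $1 * x = x \leq y$ iff $1 \leq x \Rightarrow y$, i.e., iff $x \Rightarrow y = 1$.

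For the ``if'' direction, suppose $\mathcal{A}$ is an MTL-chain and $x \sqcup y = 1$. By totality of the order, either $x \leq y$ or $y \leq x$. In the first case $x \sqcup y = y$, so $y = 1$; in the second case $x = 1$. Hence $\mathcal{A}$ is well-connected.

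For the ``only if'' direction, suppose $\mathcal{A}$ is a well-connected MTL-algebra, and let $x,y \in A$. By the prelinearity equation satisfied in every MTL-algebra,
\begin{equation*}
(x \Rightarrow y) \sqcup (y \Rightarrow x) = 1.
\end{equation*}
Well-connectedness then forces either $x \Rightarrow y = 1$ or $y \Rightarrow x = 1$, and by the residuation identity recalled above this gives $x \leq y$ or $y \leq x$. Thus the order on $\mathcal{A}$ is total, i.e., $\mathcal{A}$ is an MTL-chain.

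There is no real obstacle here: both directions are essentially a one-line application of the right tool (totality in one case, prelinearity plus well-connectedness in the other). The only point worth being careful about is making explicit that the equivalence $x \leq y \Leftrightarrow x \Rightarrow y = 1$ holds in every FL$_{ew}$-algebra, since this is the bridge between the order-theoretic statement (chain) and the equational one (prelinearity combined with well-connectedness).
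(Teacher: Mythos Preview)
Your proof is correct and is the standard argument for this fact. The paper itself does not supply a proof but simply cites \cite[Corollaries 4.19, 4.20]{nog}; your argument is exactly the expected one, using prelinearity together with the equivalence $x\leq y \Leftrightarrow x\Rightarrow y=1$ for the nontrivial direction.
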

We can reformulate \Cref{teo:subhc} as follows
\begin{theorem}\label{teo:mtlhc}
Let L be an axiomatic extension of MTL. The following are equivalent
\begin{enumerate}
\item L has the Halldén completeness.
\item There is an MTL-chain $\mathcal{A}$ such that L is complete w.r.t. it.
\item L is meet irreducible (in the lattice of axiomatic extensions of MTL).
\end{enumerate}
\end{theorem}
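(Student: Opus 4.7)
The plan is to obtain this as an easy specialization of \Cref{teo:subhc} to the MTL case, using the preceding proposition (well-connected MTL-algebras are exactly MTL-chains) as the bridge. Since every axiomatic extension of MTL is in particular an extension of FL$_{ew}$, \Cref{teo:subhc} is applicable and essentially does all the work; our task is just to rewrite its conditions (2) and (3) in terms of the MTL lattice.

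For $(1)\Leftrightarrow (2)$, I would argue as follows. If L has HC, \Cref{teo:subhc} yields a well-connected FL$_{ew}$-algebra $\mathcal{A}$ such that L is complete w.r.t.\ $\mathcal{A}$. By soundness, $\mathcal{A}$ must lie in $\mathbb{L}$, hence in particular it is an MTL-algebra, and then by the preceding proposition well-connectedness is equivalent to $\mathcal{A}$ being an MTL-chain. For the converse direction, any MTL-chain is trivially well-connected, so condition (2) above gives condition (2) of \Cref{teo:subhc}, which in turn gives HC.

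For $(1)\Leftrightarrow (3)$, the only subtle point is that \Cref{teo:subhc} speaks of meet irreducibility in the lattice of axiomatic extensions of FL$_{ew}$, while we want meet irreducibility in the lattice of axiomatic extensions of MTL. The observation that rescues the argument is that, for an MTL extension L, any decomposition $L=L_1\wedge L_2$ in either lattice forces $L_1$ and $L_2$ to be extensions of L (because $L$ is the weakest logic dominated by both), so both $L_1$ and $L_2$ contain the prelinearity axiom and are therefore MTL extensions. Hence the two notions of meet irreducibility coincide for L, and the equivalence transfers directly from \Cref{teo:subhc}.

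The only real obstacle is this lattice-comparison step in $(1)\Leftrightarrow (3)$; once one sees that meet irreducibility in the MTL sublattice agrees with meet irreducibility in the ambient FL$_{ew}$ lattice for MTL extensions, the rest of the proof is a one-line invocation of \Cref{teo:subhc} combined with the proposition.
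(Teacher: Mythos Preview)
Your proposal is correct and matches the paper's approach exactly: the paper presents this theorem without an explicit proof, simply as a direct reformulation of \Cref{teo:subhc} via the preceding proposition identifying well-connected MTL-algebras with MTL-chains. Your write-up just spells out this reformulation in detail, including the (correct) observation that meet irreducibility in the MTL lattice and in the FL$_{ew}$ lattice coincide for extensions of MTL, a point the paper leaves implicit.
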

Hence:
\begin{corollary}\label{cor:scchc}
For every axiomatic extension of MTL, the HC is equivalent to the SCC.
\end{corollary}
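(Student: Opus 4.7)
The plan is to derive this corollary almost directly from \Cref{teo:mtlhc}, by identifying its second clause — the existence of an MTL-chain w.r.t.\ which L is complete — with the SCC itself. I would split the argument into the two implications.

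For the easy direction, SCC $\Rightarrow$ HC, I would suppose that L is complete w.r.t.\ some L-chain $\mathcal{A}$. Since every L-chain is in particular an MTL-chain, condition~2 of \Cref{teo:mtlhc} is immediately satisfied, and hence L enjoys HC.

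For the converse, HC $\Rightarrow$ SCC, I would start from condition~2 of \Cref{teo:mtlhc}, which furnishes an MTL-chain $\mathcal{A}$ such that L is complete w.r.t.\ it. The step that needs an extra word is the observation that such an $\mathcal{A}$ is in fact an L-chain. Since L is algebraizable with associated variety $\mathbb{L}$, every additional axiom of L over MTL corresponds to a $1$-equation valid on all members of $\mathbb{L}$; completeness of L w.r.t.\ $\mathcal{A}$ forces each such axiom to be a tautology in $\mathcal{A}$, hence the corresponding equation holds in $\mathcal{A}$, so $\mathcal{A}\in\mathbb{L}$. Being totally ordered, $\mathcal{A}$ is then an L-chain, which yields the SCC.

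The only (minor) obstacle lies in this last point: making precise that a chain on which L is complete must itself belong to the variety $\mathbb{L}$. This is a standard use of algebraizability once the extra axioms of L over MTL are translated into the corresponding $1$-equations defining $\mathbb{L}$ inside the variety of MTL-algebras, so no genuine difficulty is expected.
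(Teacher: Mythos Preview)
Your proposal is correct and follows exactly the route the paper intends: the corollary is stated right after \Cref{teo:mtlhc} with a bare ``Hence'', so the paper treats the identification of clause~2 with the SCC as immediate. Your extra paragraph checking that the MTL-chain $\mathcal{A}$ produced by \Cref{teo:mtlhc} is actually an L-chain is the only point worth spelling out, and you handle it correctly (in this literature ``L is complete w.r.t.\ $\mathcal{A}$'' is understood as $\mathrm{Thm}(L)=\mathrm{Taut}(\mathcal{A})$, so the soundness half you invoke is indeed available).
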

Moreover
\begin{theorem}[{\cite[Corollary 5.30]{gjko}}]\label{teo:hccon}
Let L be an $n$-contractive substructural logic over FL$_{ew}$: the following are equivalent.
\begin{itemize}
\item L enjoys the HC.
\item There is a subdirectly irreducible L-algebra such that L is complete w.r.t. it.
\end{itemize}
\end{theorem}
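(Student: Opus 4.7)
The plan is to derive \Cref{teo:hccon} from \Cref{teo:subhc} by showing that, in the $n$-contractive setting, completeness with respect to a well-connected FL$_{ew}$-algebra is equivalent to completeness with respect to a subdirectly irreducible one; the new ingredient beyond \Cref{teo:subhc} is the tight control over filters/congruences granted by \Cref{teo:conldt}.

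For the direction \emph{$L$ complete w.r.t.\ an SI algebra $\mathcal{A}$} $\Rightarrow$ \emph{HC}, I would show that $\mathcal{A}$ is well-connected and then invoke \Cref{teo:subhc}. By \Cref{teo:conldt}, in an $n$-contractive FL$_{ew}$-algebra the filter generated by any $a$ is the principal upset $\uparrow a^n$, so the monolith of a subdirectly irreducible $\mathcal{A}$ has the form $\uparrow c$ for some idempotent $c<1$, with $x^n\le c$ for every $x<1$. Suppose for contradiction that $x,y<1$ and $x\sqcup y=1$. Expanding $(x\sqcup y)^n=\bigsqcup_{k=0}^n x^k y^{n-k}$ distributively, the extreme terms $x^n,y^n$ are $\le c$ while the mixed terms ($1\le k\le n-1$) are $\le x\sqcap y$, so $1=(x\sqcup y)^n\le c\sqcup(x\sqcap y)$ and hence $c\sqcup(x\sqcap y)=1$. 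Applying the same expansion to $(c\sqcup(x\sqcap y))^n$, all terms $c^k(x\sqcap y)^{n-k}$ with $k\ge 1$ collapse to $\le c$ (by the idempotence of $c$), while the remaining term $(x\sqcap y)^n$ is again $\le c$ by the monolith property, now applied to $x\sqcap y<1$. Therefore $1=(c\sqcup(x\sqcap y))^n\le c$, contradicting $c<1$.

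For the converse \emph{HC} $\Rightarrow$ \emph{$L$ complete w.r.t.\ an SI algebra}: \Cref{teo:subhc} makes HC equivalent to meet-irreducibility of $L$ in the lattice of extensions of $FL_{ew}$, dually to join-irreducibility of the variety $\mathbb{L}$ in the lattice of subvarieties of $\mathbb{FL}_{ew}$. Since $\mathbb{FL}_{ew}$ is congruence-distributive, a standard application of J\'onsson's Lemma then yields a single subdirectly irreducible algebra $\mathcal{B}$ generating $\mathbb{L}$, so that $L$ is complete w.r.t.\ $\mathcal{B}$.

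The principal obstacle is the double distributive expansion in the forward direction: without prelinearity the mixed monomials in $(x\sqcup y)^n$ cannot simply be discarded, and the key trick is to iterate the expansion on the pair $(c,\,x\sqcap y)$ produced after the first round, where the idempotence of $c$ together with the monolith property $(x\sqcap y)^n\le c$ finally close the argument -- this is precisely where $n$-contractivity (through \Cref{teo:conldt}) is used essentially.
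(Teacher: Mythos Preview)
The paper gives no proof of \Cref{teo:hccon}; it simply quotes \cite[Corollary 5.30]{gjko}. So your attempt must be judged on its own merits.

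Your forward implication (existence of a generic SI algebra $\Rightarrow$ HC) is correct, but the elaborate two-stage distributive expansion is unnecessary, and in fact $n$-contractivity plays no role here. In \emph{every} FL$_{ew}$-algebra one has SI $\Rightarrow$ FSI $\Rightarrow$ well-connected: for the last step, if $x\sqcup y=1$ with $x,y<1$, then for all $m,k$ one gets $(x\sqcup y)^{m+k}\le x^{m}\sqcup y^{k}$, so $x^{m}\sqcup y^{k}=1$; hence the filters generated by $x$ and by $y$ intersect trivially, contradicting FSI. Then \Cref{teo:subhc} applies. So this direction holds already for arbitrary logics over FL$_{ew}$.

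The genuine gap is in your converse (HC $\Rightarrow$ existence of a generic SI algebra). Your appeal to ``a standard application of J\'onsson's Lemma'' does not do what you claim: J\'onsson's Lemma locates the SI members of $V(K)$ inside $HSP_u(K)$, but it does \emph{not} say that a finitely join-irreducible subvariety of a congruence-distributive variety is generated by a single SI algebra. Indeed the variety $\mathbb{G}$ of G\"odel algebras is finitely join-irreducible yet equals the infinite join $\bigvee_n\mathbb{G}_n$, so finite join-irreducibility cannot be upgraded to complete join-irreducibility by distributivity alone. More tellingly, your argument for this direction uses only congruence-distributivity of $\mathbb{FL}_{ew}$ and never invokes $n$-contractivity; were it valid, \Cref{teo:hccon} would hold for \emph{all} extensions of FL$_{ew}$, collapsing it into \Cref{teo:subhc}. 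This shows you have placed the essential use of $n$-contractivity on the wrong side: it is the passage from a generic well-connected (equivalently FSI) algebra to a generic SI algebra that requires it (via the global deduction theorem of \Cref{teo:conldt}, which makes finitely generated filters principal and gives the variety equationally definable principal congruences). That is the step your proposal leaves open.
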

A particular case of single-chain completeness is the following:
\begin{definition}
Let L be an axiomatic extension of MTL. We say that L enjoys the subdirect single chain completeness (subSCC) if there is a generic subdirectly irreducible L-algebra.
\end{definition}
Clearly the subSCC implies the SCC, since every subdirectly irreducible MTL-algebra is totally ordered: for the $n$-contractive extensions of MTL, thanks to \Cref{cor:scchc} and  \Cref{teo:hccon}, also the converse holds.
\begin{theorem}\label{teo:subscc}
Let L be an $n$-contractive extension of MTL: the following are equivalent.
\begin{itemize} 
\item L enjoys the SCC. 
\item L enjoys the subSCC.
\end{itemize}
\end{theorem}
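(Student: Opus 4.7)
The plan is to combine the two equivalences already recorded in the excerpt: \Cref{cor:scchc}, which gives SCC $\Leftrightarrow$ HC for every axiomatic extension of MTL, and \Cref{teo:hccon}, which gives HC $\Leftrightarrow$ ``complete w.r.t.\ a subdirectly irreducible algebra'' for every $n$-contractive substructural logic over FL$_{ew}$. Since an $n$-contractive extension of MTL is in particular an $n$-contractive logic over FL$_{ew}$, both results apply simultaneously.

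First I would dispose of the easy direction: subSCC $\Rightarrow$ SCC. This was already observed just before the statement, and relies only on the fact that every subdirectly irreducible MTL-algebra is totally ordered, so a generic subdirectly irreducible L-algebra is an L-chain and witnesses SCC. This part does not use $n$-contractivity.

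For the converse, assume L enjoys the SCC. By \Cref{cor:scchc}, L has the Halld\'en completeness. Since L is by hypothesis $n$-contractive and is a substructural logic over FL$_{ew}$, \Cref{teo:hccon} applies and yields a subdirectly irreducible L-algebra $\mathcal{A}$ such that L is complete with respect to $\mathcal{A}$. This $\mathcal{A}$ is precisely a generic subdirectly irreducible L-algebra, so L enjoys the subSCC.

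There is essentially no obstacle beyond a careful check that the hypotheses of the imported theorems match: one must verify that the class of $n$-contractive extensions of MTL is a subclass of the $n$-contractive substructural logics over FL$_{ew}$ considered in \Cref{teo:hccon} (which follows from the axiomatization of MTL as FL$_{ew}$ plus prelinearity mentioned in the Syntax subsection), and that the subdirectly irreducible algebra produced is automatically an L-chain, so that its existence is exactly the content of the subSCC as defined above.
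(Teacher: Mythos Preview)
Your proposal is correct and follows exactly the paper's own argument: the paper proves the theorem in the sentence immediately preceding it, invoking \Cref{cor:scchc} and \Cref{teo:hccon} for the nontrivial direction and the observation that subdirectly irreducible MTL-algebras are chains for the trivial one. Your write-up simply spells out the hypothesis-matching more carefully, but the route is identical.
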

\begin{corollary}
The following $n$-contractive extensions of MTL enjoy the subSCC: WNM, RDP, NM, G, {\L}$_n$, SMTL$^n$, SBL$^n$ (see \cite{eg,rdp,haj,grig,bln} for their axiomatization). 
\end{corollary}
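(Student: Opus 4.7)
The plan is to invoke \Cref{teo:subscc}: once we verify, for each logic L in the list, that (i) L is $n$-contractive for some positive integer $n$, and (ii) L enjoys the SCC, the theorem immediately upgrades SCC to subSCC. Thus the entire argument reduces to a bookkeeping check logic-by-logic against the cited references.

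For step (i), the $n$-contractivity is either explicitly part of the defining axioms or is a known consequence of them. Gödel logic G has the idempotence axiom $\varphi\to\varphi\&\varphi$, hence is $1$-contractive. The logics SMTL$^n$ and SBL$^n$ are $n$-contractive by definition. The logic {\L}$_n$ is well-known to be $(n-1)$-contractive (from its standard axiomatization). For NM and WNM, one uses the (weak) nilpotent minimum axiom to verify $\varphi^2\leftrightarrow\varphi^3$, so both are $2$-contractive. For RDP, inspection of the axioms in the corresponding reference gives a small $n$.

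For step (ii), in each case a concrete generating chain is available from the literature. Gödel logic and NM are standard complete, i.e.\ complete with respect to their standard $[0,1]$-chain, which is a fortiori SCC. The logic {\L}$_n$ is complete with respect to the finite $(n+1)$-element {\L}ukasiewicz chain. For WNM, RDP, SMTL$^n$, and SBL$^n$, the papers cited in the statement produce canonical generating chains establishing SCC (for instance, known standard completeness results or finite chain completeness results).

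Having both (i) and (ii) for each listed logic, an application of \Cref{teo:subscc} delivers subSCC in every case. The only real work is the list-checking for (i) and (ii); there is no conceptual obstacle, since \Cref{teo:subscc} has already done all the heavy lifting by converting SCC into the existence of a generic subdirectly irreducible algebra under the $n$-contractivity hypothesis.
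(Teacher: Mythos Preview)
Your proposal is correct and follows essentially the same route as the paper: the paper's proof simply records that the cited references exhibit generic chains for each of these logics (establishing SCC) and then invokes \Cref{teo:subscc}. Your additional verification of $n$-contractivity is harmless but strictly speaking unnecessary, since the statement of the corollary already asserts that these are $n$-contractive extensions.
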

\begin{proof}
In \cite{nog,eg,haj,grig,monchain,bln,rdp} are shown examples of generic chains for the varieties associated to these logics. The claim of the corollary follows from \Cref{teo:subscc}.
\end{proof}
Since in \cite[Proposition 37]{wcmtl} it is shown that every locally finite subvariety of MTL-algebras is $n$-contractive, for some $n$, then from the previous theorem we have:
\begin{corollary}
Let L be an extension of MTL whose corresponding variety is locally finite: the following are equivalent.
\begin{itemize} 
\item L enjoys the SCC. 
\item L enjoys the subSCC.
\end{itemize}
\end{corollary}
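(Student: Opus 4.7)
The plan is to reduce the statement to \Cref{teo:subscc} by showing that the local finiteness hypothesis forces the logic to be $n$-contractive for some $n$.

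First I would invoke \cite[Proposition 37]{wcmtl}, cited in the paragraph preceding the corollary: since $\mathbb{L}$ is locally finite, there exists a positive integer $n$ such that L is $n$-contractive. Once this is established, the equivalence of SCC and subSCC is immediate from \Cref{teo:subscc}, which says precisely that for $n$-contractive extensions of MTL the two properties coincide.

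For completeness one can still recall the direction subSCC $\Rightarrow$ SCC, which is already observed in the paragraph before \Cref{teo:subscc}: a subdirectly irreducible MTL-algebra is totally ordered, so a generic subdirectly irreducible L-algebra is in particular a generic L-chain. The converse direction is the substantive one, and in the $n$-contractive setting it follows by the chain SCC $\Rightarrow$ HC $\Rightarrow$ subSCC, combining \Cref{cor:scchc} with \Cref{teo:hccon}.

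No real obstacle arises, since the only work is recognizing that local finiteness triggers the cited $n$-contractivity result; after that the corollary is a one-line application of \Cref{teo:subscc}.
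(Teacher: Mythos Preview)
Your argument is correct and matches the paper's approach exactly: the corollary is derived from \Cref{teo:subscc} after observing, via \cite[Proposition 37]{wcmtl}, that local finiteness forces $n$-contractivity. The additional unpacking you give of the two directions is fine but not needed.
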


\begin{problem}\label{prob:1}
Are there (non $n$-contractive) axiomatic extensions of MTL enjoying the SCC but not the subSCC ?
\end{problem}
Even if it is not a solution for the previous problem, we have the following result:
\begin{theorem}
The following non $n$-contractive extensions of MTL enjoy the subSCC: SMTL, BL, SBL, \L, $\Pi$ (see \cite{rat,haj,cegt} for their axiomatization).
\end{theorem}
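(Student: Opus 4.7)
The plan is to exhibit, for each of the five logics $L$ in the list, a single $L$-chain $\mathcal{C}$ that is generic for $\mathbb{L}$ (so $L$ is complete with respect to $\mathcal{C}$) and at the same time subdirectly irreducible as an $L$-algebra. Recall that an MTL-chain is subdirectly irreducible iff its lattice of implicative filters has a minimum non-trivial element, so this is the criterion I would verify in each case.

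For \L\ and $\Pi$ I would take the standard algebras $[0,1]_{\text{\L}}$ and $[0,1]_{\Pi}$ respectively; their genericity is the classical standard completeness (Chang's theorem for \L, and the product-algebra analogue recorded in \cite{haj}). The MV-algebra $[0,1]_{\text{\L}}$ is simple, hence subdirectly irreducible. For $[0,1]_{\Pi}$ a direct check shows that any $0<x<1$ satisfies $x^n>0$ for every $n$ while $x^n\to 0$, so the filter generated by $x$ is exactly $(0,1]$; consequently the only filters are $\{1\}$, $(0,1]$ and $[0,1]$, and $(0,1]$ is the desired minimum non-trivial one.

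For BL, SBL and SMTL the corresponding t-norm algebras are not generic (in the BL case because strong standard completeness requires the whole class of continuous t-norm algebras, see \cite{cegt}), so I would instead pick up the generic chains already constructed in the SCC-part of \cite{monchain} and in the axiomatization references \cite{rat,haj}. For each such $\mathcal{C}$ the only remaining issue is subdirect irreducibility, which, via the Aglian\`o--Montagna representation of a BL-chain (and its SBL/SMTL variants) as an ordinal sum of Wajsberg hoops, reduces to the statement that the bottom non-trivial summand produces a minimum non-trivial filter of the whole sum; the chains used in \cite{monchain} are in fact designed to have this feature.

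The main obstacle will be precisely this last verification for BL, SBL and SMTL: one must describe the generic chain explicitly and chase through its filter structure to locate the monolith. The \L\ and $\Pi$ cases, by contrast, are essentially one-line observations once the standard completeness theorems are invoked.
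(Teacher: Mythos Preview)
For \L\ and $\Pi$ your plan coincides with the paper's: take the standard algebra, which is both generic and subdirectly irreducible.

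For BL, SBL and SMTL, however, the paper bypasses the filter-chasing you anticipate via a uniform construction: take \emph{any} generic $L$-chain $\mathcal{A}$ (existence is granted by the SCC results in \cite{monchain}) and form the ordinal sum $\mathcal{A}\oplus\mathbf{2}$. Because these three varieties are closed under ordinal sums (\cite{nog}), $\mathcal{A}\oplus\mathbf{2}$ is again an $L$-chain; it is still generic since $\mathcal{A}$ is a homomorphic image of $\mathcal{A}\oplus\mathbf{2}$. Subdirect irreducibility is then automatic: the former top $1_{\mathcal{A}}$ becomes an idempotent coatom of $\mathcal{A}\oplus\mathbf{2}$, so $\{1_{\mathcal{A}},1_{\mathcal{A}\oplus\mathbf{2}}\}$ is the monolith filter. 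No information about the internal structure of $\mathcal{A}$ is required.

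Your route is not unsound in principle, but it is more laborious and contains a slip: in an ordinal sum, the minimum non-trivial implicative filter is governed by the \emph{top} component, not the bottom one, since implicative filters are upward-closed. Thus the verification you sketch would have to be reoriented, and you would still need to check that the particular generic chains taken from the literature really do carry a suitable top component---which they need not, a priori. The paper's $\mathcal{A}\oplus\mathbf{2}$ device sidesteps all of this in one line.
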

\begin{proof}
First of all, note that all these logics enjoys the SCC (for details see \cite{monchain}).

As regards to $\text{L}\in\{\text{SMTL}, \text{BL}, \text{SBL}\}$ take a generic L-chain $\mathcal{A}$: then $\mathcal{A}\oplus\mathbf{2}$ is a subdirectly irreducible generic L-chain (indeed, as pointed out in \cite{nog} the varieties corresponding to these logics are closed under ordinal sums). Finally, concerning $\text{L}\in\{\text{\L}, \Pi\}$, note that the standard L-algebra is a subdirectly irreducible generic L-chain.
\end{proof}
Another property, similar to the HC, is the following
\begin{definition}
A logic L has the \emph{deductive Maksimova's variable separation property} (DMVP), if for all sets of formulas
$\Gamma\cup \{\varphi\}$ and $\Sigma \cup \{\psi\}$ that have no variables in common, $\Gamma, \Sigma \vdash_L \varphi\vee\psi$ implies $\Gamma\vdash_L \varphi$ or $\Sigma \vdash_L \psi$.
\end{definition}
As can be easily seen, the DMVP implies the HC. Moreover, the first property can be algebraically characterized as follows:
\begin{theorem}[{\cite[Theorem 6.9]{ki}}]\label{teo:dmvp}
The following conditions are equivalent for every substructural logic L over FL$_{ew}$:
\begin{itemize}
\item L has the DMVP.
\item All pairs of subdirectly irreducible L-algebras are jointly embeddable into a well-connected L-algebra.
\item All pairs of subdirectly irreducible L-algebras are jointly embeddable into a subdirectly irreducible L-algebra.
\end{itemize}
\end{theorem}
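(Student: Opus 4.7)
The plan is to establish the three equivalences through the cycle $(3)\Rightarrow(2)\Rightarrow(1)\Rightarrow(3)$.

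First, I would prove the auxiliary lemma that every subdirectly irreducible FL$_{ew}$-algebra is well-connected. Suppose $\mathcal{A}$ is subdirectly irreducible, with monolith filter generated by some element $c<1$; assume for contradiction that $x\sqcup y=1$ with $x,y<1$. The principal filters $[x)$ and $[y)$ are both nontrivial and hence contain $c$, giving $x^n\leq c$ and $y^m\leq c$ for some $n,m$. Since $*$ distributes over $\sqcup$ in every FL$_{ew}$-algebra, expanding $(x\sqcup y)^{n+m}$ as a join of monomials $x^i y^{n+m-i}$ and observing that in each of these either $i\geq n$ or $n+m-i\geq m$, we obtain $1=(x\sqcup y)^{n+m}\leq c$, a contradiction. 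This lemma yields $(3)\Rightarrow(2)$ at once, since an embedding into an s.i.\ algebra is automatically an embedding into a well-connected one.

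For $(2)\Rightarrow(1)$ I proceed contrapositively. If DMVP fails, there exist variable-disjoint sets $\Gamma\cup\{\varphi\}$ and $\Sigma\cup\{\psi\}$ such that $\Gamma,\Sigma\vdash_L\varphi\vee\psi$ while $\Gamma\nvdash_L\varphi$ and $\Sigma\nvdash_L\psi$. Algebraic completeness of L together with Birkhoff's subdirect representation theorem furnishes subdirectly irreducible L-algebras $\mathcal{A}_1,\mathcal{A}_2$ with valuations $v_1,v_2$ satisfying $v_1[\Gamma]\subseteq\{1\}$, $v_1(\varphi)<1$, and symmetrically on the other side. If there were embeddings $f_i\colon\mathcal{A}_i\hookrightarrow\mathcal{C}$ into a well-connected $\mathcal{C}$, the $\mathcal{C}$-valuation obtained by pasting $f_1\circ v_1$ on the variables of the first set and $f_2\circ v_2$ on the variables of the second (well-defined thanks to disjointness) would force $v(\varphi\vee\psi)=1$ while $v(\varphi)<1$ and $v(\psi)<1$, contradicting the well-connectedness of $\mathcal{C}$.

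The heart of the argument is $(1)\Rightarrow(3)$. Given s.i.\ L-algebras $\mathcal{A},\mathcal{B}$ with witnesses $a<1_{\mathcal{A}}$ and $b<1_{\mathcal{B}}$ coming from their monoliths, I introduce disjoint sets of fresh constants naming the elements of each algebra and form the diagrams $\Delta_{\mathcal{A}}$ and $\Delta_{\mathcal{B}}$ (positive parts plus the inequations separating distinct constants). Using Zorn's lemma I extend $\Delta_{\mathcal{A}}\cup\Delta_{\mathcal{B}}$ to an L-theory $T$ maximal among those whose restriction to $\mathcal{A}$-constants does not derive the constant naming $a$, and symmetrically for $\mathcal{B}$. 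The DMVP, applied to formulas partitioned by the two disjoint constant sets, prevents $T$ from proving any cross-sided disjunction without already proving one of the disjuncts; combined with the maximality this forces the Lindenbaum algebra of $T$ to be subdirectly irreducible, and the diagrams ensure that the induced maps from $\mathcal{A}$ and $\mathcal{B}$ into it are embeddings.

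The main technical obstacle lies precisely in this last step: ensuring that the resulting Lindenbaum algebra is subdirectly irreducible and not merely well-connected, i.e., pinpointing a uniform element below $1$ that belongs to every nontrivial filter. This requires a careful interleaving of the DMVP (which preserves variable-disjoint independence) with the s.i.\ structure of $\mathcal{A}$ and $\mathcal{B}$ (which supplies the candidate witnesses $a$ and $b$), so that the maximality argument keeps these witnesses alive in the quotient while simultaneously preserving both embeddings.
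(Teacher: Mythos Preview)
The paper does not supply a proof of this theorem; it is quoted from \cite[Theorem 6.9]{ki} and used as a black box, so there is no in-paper argument to compare your proposal against.

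On the substance of your proposal: the implications $(3)\Rightarrow(2)$ and $(2)\Rightarrow(1)$ are standard and your sketches are correct (one minor point: the monolith filter of a subdirectly irreducible FL$_{ew}$-algebra need not be principal, but your computation only requires some fixed $c<1$ lying in it, which always exists, so the argument goes through unchanged).

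The genuine gap is in $(1)\Rightarrow(3)$. Your plan asks for a theory $T$ that is simultaneously maximal with respect to two independent non-derivability conditions, one for $c_a$ and one for $c_b$; a single application of Zorn's lemma does not produce such a $T$, and maximizing for only one condition gives you no control over the other embedding. Even if you repair this by maximizing subject to a single combined condition (for instance ``$T$ proves neither $c_a$ nor $c_b$'') and then invoke DMVP to obtain primeness, what you get is a well-connected Lindenbaum quotient, not a subdirectly irreducible one: a nontrivial filter of the quotient may perfectly well avoid both $[c_a]$ and $[c_b]$, so neither of your candidate witnesses, nor their meet, is forced to belong to every nontrivial filter. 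Your final paragraph correctly identifies this as the crux, but it describes the difficulty rather than overcoming it. As written, $(1)\Rightarrow(3)$ remains a plan with an unresolved step rather than a proof; closing it requires a sharper extension lemma that manufactures an explicit opremum from the monolith data of $\mathcal{A}$ and $\mathcal{B}$, which is precisely the technical content supplied in \cite{ki}.
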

\begin{problem}\label{prob:2}
Are there some examples of extensions of MTL enjoying the HC but not the DMVP ?
\end{problem}
Consider the following property:
\begin{definition}
Let L be an axiomatic extension of MTL. We say that its corresponding variety enjoys the chain joint embedding property (CJEP) whenever every pair of L-chains is embeddable into some L-chain.
\end{definition}
The CJEP is very important for the (strong) single chain completeness results:
\begin{theorem}[\cite{monchain}]
Let L be an axiomatic extension of MTL. Then L enjoys the SSCC iff its corresponding variety has the CJEP.
\end{theorem}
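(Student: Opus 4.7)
The plan is to prove both directions; model theory (ultraproducts, elementary extensions) handles $(\Rightarrow)$, while a transfinite construction yields $(\Leftarrow)$.

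\textbf{($\Rightarrow$)} Assume L is strongly complete with respect to an L-chain $\mathcal{C}$. By the standard characterization of strong completeness for finitary algebraizable logics, every L-algebra embeds into some ultrapower of $\mathcal{C}$. Given two L-chains $\mathcal{A}_1, \mathcal{A}_2$, pick ultrapowers $\mathcal{C}^{I_j}/U_j$ containing $\mathcal{A}_j$ for $j=1,2$. Each $\mathcal{C}^{I_j}/U_j$ is itself an L-chain, because $\mathbb{L}$ is closed under ultraproducts (being a variety) and linearity is a first-order sentence preserved by {\L}o\'s's theorem. Both ultrapowers are elementary extensions of $\mathcal{C}$ (via the diagonal embedding), and two elementary extensions of a common structure always admit a joint elementary extension $\mathcal{C}^{\star}$ by the usual compactness argument on elementary diagrams. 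Since linearity is preserved, $\mathcal{C}^{\star}$ is an L-chain containing isomorphic copies of both $\mathcal{A}_1$ and $\mathcal{A}_2$, establishing CJEP.

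\textbf{($\Leftarrow$)} Assume CJEP. I aim to construct a single L-chain $\mathcal{C}$ into which every countable L-chain embeds: this is enough for SSCC, since a failed deduction $\Gamma \not\vdash_L \varphi$ is witnessed, by chain-completeness of L (\cite{eg}), by an evaluation $v$ into some L-chain whose image generates a countable L-subchain. The isomorphism classes of countable L-chains form a set of cardinality at most $2^{\aleph_0}$; well-order them as $\{\mathcal{A}_\alpha\}_{\alpha < \kappa}$. Build an ascending sequence of L-chains by transfinite recursion: set $\mathcal{C}_0 \df \mathcal{A}_0$; at a successor $\alpha+1$, invoke CJEP to obtain an L-chain $\mathcal{C}_{\alpha+1}$ containing both $\mathcal{C}_\alpha$ and $\mathcal{A}_{\alpha+1}$; at a limit $\lambda$, take $\mathcal{C}_\lambda \df \bigcup_{\alpha < \lambda}\mathcal{C}_\alpha$. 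Finally let $\mathcal{C} \df \bigcup_{\alpha<\kappa}\mathcal{C}_\alpha$.

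That $\mathcal{C}$ is an L-chain rests on two standard universal-algebra facts: a directed union of L-embeddings stays inside the variety $\mathbb{L}$, and the directed union of chains is a chain (any two elements already appear in some $\mathcal{C}_\alpha$). Strong completeness w.r.t.\ $\mathcal{C}$ then follows: given $\Gamma \not\vdash_L \varphi$, pick a chain witness $\mathcal{B}$ with evaluation $v$; the countable subchain $\mathcal{B}'\subseteq\mathcal{B}$ generated by $v$ on the variables is isomorphic to some $\mathcal{A}_\alpha$, which embeds into $\mathcal{C}$ by construction, and the composed evaluation into $\mathcal{C}$ refutes $\Gamma \vdash_L \varphi$.

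The main obstacle is $(\Leftarrow)$: the construction of $\mathcal{C}$ uses an induction of length up to $2^{\aleph_0}$, and one must carefully verify that limits remain in the class of L-chains. For $(\Rightarrow)$, the non-trivial input is the ultraproduct characterization of strong completeness; as a more self-contained alternative one can apply compactness directly to the positive diagrams of $\mathcal{A}_1$ and $\mathcal{A}_2$ together with the first-order theory of L-chains, but the ultraproduct route is considerably more concise.
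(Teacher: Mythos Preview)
The paper does not supply its own proof of this statement---it simply cites \cite{monchain}---so there is nothing to compare against directly. I will therefore focus on the correctness of your argument.

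Your $(\Leftarrow)$ direction is fine: the transfinite construction of a chain absorbing every countable L-chain is standard, and since the language has only denumerably many variables, refuting an arbitrary $\Gamma\not\vdash_L\varphi$ reduces to a countable subchain exactly as you say.

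The $(\Rightarrow)$ direction, however, has a genuine gap. The sentence ``by the standard characterization of strong completeness for finitary algebraizable logics, every L-algebra embeds into some ultrapower of $\mathcal{C}$'' is false as written: $\mathbf{2}\times\mathbf{2}$ is an L-algebra that is not a chain, so it cannot embed into any ultrapower of the chain $\mathcal{C}$. Presumably you mean every L-\emph{chain}, but even then the ``standard characterization'' you invoke does not exist in that form. For a finitary algebraizable logic, strong completeness with respect to a single algebra $\mathcal{C}$ is \emph{not} equivalent to $\mathbb{L}\subseteq\mathbf{ISP}_U(\mathcal{C})$; the latter is (essentially) finite strong completeness, and semantic consequence over $\{\mathcal{C}\}$ need not be compact. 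What you actually need is the lemma: \emph{if L is strongly complete with respect to $\mathcal{C}$, then every countable L-chain embeds into $\mathcal{C}$}. This is true, but it is not a triviality---the difficulty is exactly that a $\mathcal{C}$-evaluation witnessing $\Gamma_{\mathcal{A}}\not\vdash_L\bot$ gives a homomorphism $\mathcal{A}\to\mathcal{C}$ that need not be injective. One has to arrange a single consequence whose refutation simultaneously forces all the required strict inequalities, exploiting that $\mathcal{C}$ is well-connected; this is done carefully in \cite{monchain} (and in the companion paper on distinguished semantics). Once that lemma is in hand, every finitely generated subchain of an arbitrary L-chain embeds into $\mathcal{C}$, hence every L-chain satisfies $\mathrm{Th}_\forall(\mathcal{C})$ and embeds into an ultrapower of $\mathcal{C}$, and then your joint-elementary-extension step is correct. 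So the architecture of your argument is sound, but the load-bearing lemma is asserted rather than proved, and the justification you give for it is the wrong one.
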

Moreover, as a consequence of \Cref{teo:dmvp}, we have the following result:
\begin{theorem}
Let L be an axiomatic extension of MTL. If the variety of L-algebras enjoys the CJEP, then L has the DMVP. 
\end{theorem}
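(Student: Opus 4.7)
The plan is to reduce the claim directly to \Cref{teo:dmvp}. That theorem gives an algebraic criterion for DMVP in terms of joint embeddability of pairs of subdirectly irreducible L-algebras into a well-connected L-algebra. So it suffices to show that, under CJEP, any two subdirectly irreducible L-algebras $\mathcal{A}_1, \mathcal{A}_2$ embed jointly into some well-connected L-algebra.

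First I would recall the two facts already mentioned in the paper that make this reduction essentially immediate. Right before \Cref{teo:subscc} it is observed that every subdirectly irreducible MTL-algebra is totally ordered, so $\mathcal{A}_1$ and $\mathcal{A}_2$ are in fact L-chains. Also, by the proposition stated just before \Cref{teo:mtlhc}, every MTL-chain is well-connected (this is the trivial direction: in a chain, $x\sqcup y$ is $\max(x,y)$, so $x\sqcup y=1$ forces $x=1$ or $y=1$).

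The rest is just a concatenation. By the CJEP hypothesis on the variety of L-algebras, the two L-chains $\mathcal{A}_1,\mathcal{A}_2$ jointly embed into some L-chain $\mathcal{B}$. Since $\mathcal{B}$ is an MTL-chain it is well-connected, so we have found a well-connected L-algebra into which $\mathcal{A}_1$ and $\mathcal{A}_2$ jointly embed. Applying the equivalence of the first two items in \Cref{teo:dmvp} yields that L has the DMVP.

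There is no serious obstacle here: the result is essentially an observation that, in the MTL setting, ``well-connected = chain'' collapses the abstract algebraic condition of \Cref{teo:dmvp} to precisely the chain-version of joint embeddability, which is exactly CJEP. The only thing one should be careful to state explicitly is that the embeddings provided by CJEP can be chosen compatibly so as to witness joint embeddability in the sense of \Cref{teo:dmvp}, but this is immediate from the definition of CJEP applied to the pair $(\mathcal{A}_1,\mathcal{A}_2)$.
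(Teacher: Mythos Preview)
Your proposal is correct and follows exactly the route the paper indicates: the paper simply states that the result is a consequence of \Cref{teo:dmvp}, and your argument spells out precisely that consequence, using that subdirectly irreducible MTL-algebras are chains and that chains are well-connected. There is nothing to add.
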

\begin{problem}\label{prob:dmvp}
Does the DMVP imply the CJEP ?
\end{problem}
Consider now:
\begin{definition}
\begin{itemize}
\item A logic L has the \emph{pseudo-relevance property} (PRP), if for all pairs of formulas $\varphi, \psi$ with no variables in common, \mbox{$\vdash_L\varphi\to\psi$} implies either $\vdash_L\neg\varphi$ or $\vdash_L\psi$.
\item A logic L has the \emph{deductive pseudo-relevance property} (DPRP), if for every theory $\Gamma$ and formula $\psi$ with no variables in common, \mbox{$\Gamma\vdash_L\psi$} implies either $\Gamma\vdash_L\bot$ or $\vdash_L\psi$.
\item A logic L has the \emph{strong deductive pseudo-relevance property} (SDPRP), if for every sets of formulas $\Gamma$ and  $\Sigma\cup\{\psi\}$ with no variables in common, \mbox{$\Gamma, \Sigma\vdash_L\psi$} implies either $\Gamma\vdash_L\bot$ or $\Sigma\vdash_L\psi$.
\end{itemize}
\end{definition}
It holds that
\begin{theorem}[\cite{gjko}]\label{teo:sdprp}
Let L be a logic over FL$_{ew}$.
\begin{itemize}
\item L enjoys the SDPRP if and only if every pair of subdirectly irreducible L-algebras is jointly embeddable into an L-algebra.
\item SDPRP implies DPRP for every L, and the converse holds also when the variety of L-algebras has the CEP (i.e. every pair of L-algebras $\mathcal{A}, \mathcal{B}$, with $\mathcal{A}$ being a subalgebra of $\mathcal{B}$, is such that for every congruence $\theta$ of $\mathcal{A}$ there is a congruence $\theta'$ of $\mathcal{B}$ such that $\theta= \theta'\cap\mathcal{A}^2$).
\end{itemize}
\end{theorem}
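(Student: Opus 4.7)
For the first bullet I would use the standard ``diagrams-of-algebras'' technique together with algebraic completeness of L over $\text{FL}_{ew}$.

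$(\Leftarrow)$ Assume every pair of subdirectly irreducible L-algebras has a joint embedding into a common L-algebra, and suppose $\Gamma,\Sigma\vdash_L\psi$ with $\mathrm{Var}(\Gamma)\cap\mathrm{Var}(\Sigma\cup\{\psi\})=\emptyset$, $\Gamma\not\vdash_L\bot$ and $\Sigma\not\vdash_L\psi$. Completeness supplies L-algebras $\mathcal{A}_0,\mathcal{B}_0$ with assignments $v_1,v_2$ witnessing $v_1(\Gamma)=\{1\}$ and $0<1$ in $\mathcal{A}_0$, $v_2(\Sigma)=\{1\}$ and $v_2(\psi)<1$. Birkhoff's subdirect representation lets me pass to subdirectly irreducible quotients $\mathcal{A},\mathcal{B}$ preserving these inequalities (for $\mathcal{B}$ one chooses a subdirect factor on which the image of $\psi$ is still strictly below $1$). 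The joint embedding of $\mathcal{A},\mathcal{B}$ into a common L-algebra $\mathcal{C}$, together with disjointness of variables, then lets me glue $v_1,v_2$ into a single assignment into $\mathcal{C}$ validating $\Gamma\cup\Sigma$ but sending $\psi$ below $1$, contradicting $\Gamma,\Sigma\vdash_L\psi$.

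$(\Rightarrow)$ Assume SDPRP. Given subdirectly irreducible L-algebras $\mathcal{A},\mathcal{B}$, introduce disjoint variable sets $X_\mathcal{A}=\{x_a:a\in\mathcal{A}\}$, $X_\mathcal{B}=\{y_b:b\in\mathcal{B}\}$ and form the positive diagrams $\Gamma_\mathcal{A}=\{\varphi(x_{a_1},\ldots,x_{a_n}):\varphi^{\mathcal{A}}(a_1,\ldots,a_n)=1\}$ and $\Sigma_\mathcal{B}$ analogously. First, $\Gamma_\mathcal{A}\cup\Sigma_\mathcal{B}\not\vdash_L\bot$: otherwise SDPRP with $\psi=\bot$ forces $\Gamma_\mathcal{A}\vdash_L\bot$ or $\Sigma_\mathcal{B}\vdash_L\bot$, each contradicting nontriviality. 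Hence $\mathbf{LT}(\Gamma_\mathcal{A}\cup\Sigma_\mathcal{B})$ is a nontrivial L-algebra, and the maps $a\mapsto[x_a]$, $b\mapsto[y_b]$ are homomorphisms by construction of the diagrams. Injectivity of $a\mapsto[x_a]$ comes again from SDPRP: if $\Gamma_\mathcal{A},\Sigma_\mathcal{B}\vdash_L x_a\leftrightarrow x_{a'}$, applying SDPRP with $\Sigma_\mathcal{B}$ playing the role of the first theory (the conclusion uses only $X_\mathcal{A}$-variables, disjoint from $X_\mathcal{B}$) yields $\Sigma_\mathcal{B}\vdash_L\bot$ (impossible) or $\Gamma_\mathcal{A}\vdash_L x_a\leftrightarrow x_{a'}$, forcing $a=a'$ upon evaluation in $\mathcal{A}$; the analogous argument handles $\mathcal{B}$.

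For the second bullet, SDPRP $\Rightarrow$ DPRP is immediate on specialising to $\Sigma=\emptyset$ in the definition of SDPRP. For the converse under CEP, the plan is to exploit the algebraic characterisation established in the first bullet: given any two subdirectly irreducible L-algebras, CEP should be invoked to extend a congruence realising one of them across a free-coproduct L-algebra on the disjoint generators, producing a common L-algebra embedding both and thereby the joint embedding which, by the first bullet, is equivalent to SDPRP. The main obstacle is precisely this amalgamation step: DPRP by itself controls only one-sided variable separations, and CEP is the algebraic ingredient needed to upgrade it to the symmetric joint embedding characterising SDPRP. (Note that in any extension of MTL one can shortcut this using the local deduction theorem \Cref{teo:ldt}: rewrite $\Gamma,\Sigma\vdash_L\psi$ as $\Gamma\vdash_L\alpha\to\psi$ for some $\alpha$ built from a finite subset of $\Sigma$, and apply DPRP directly to deduce $\Gamma\vdash_L\bot$ or $\Sigma\vdash_L\psi$; so CEP is genuinely needed only in the broader $\text{FL}_{ew}$ setting.)
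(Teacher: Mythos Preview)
The paper does not prove this theorem: it is quoted verbatim from \cite{gjko} and no argument is given in the present article, so there is nothing here to compare your attempt against.

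On the substance of your proposal: the treatment of the first bullet is essentially the standard one (and is in the spirit of the arguments in \cite[Chapter~5]{gjko}); both directions are correct as written. One small remark on the $(\Rightarrow)$ direction: you never actually use subdirect irreducibility of $\mathcal{A},\mathcal{B}$, so what you prove is the stronger statement that SDPRP implies joint embeddability of \emph{any} pair of nontrivial L-algebras---which is fine, but worth noting.

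For the second bullet, the implication SDPRP $\Rightarrow$ DPRP is indeed trivial. Your argument for the converse under CEP, however, is only a vague plan (``CEP should be invoked to extend a congruence\ldots''): you do not say which congruence is being extended, over which algebra, or why the resulting quotient embeds both $\mathcal{A}$ and $\mathcal{B}$. By contrast, your parenthetical remark is a complete and correct proof: from $\Gamma,\Sigma\vdash_L\psi$ pass to $\Gamma\vdash_L\alpha\to\psi$ with $\alpha$ a $\&$-conjunction of finitely many members of $\Sigma$ (using compactness and the local deduction theorem), then apply DPRP to conclude $\Gamma\vdash_L\bot$ or $\vdash_L\alpha\to\psi$, the latter giving $\Sigma\vdash_L\psi$. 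You present this as a shortcut available only for extensions of MTL, but in fact the local deduction theorem of \Cref{teo:ldt} already holds for every logic over FL$_{ew}$ (deductive filters in FL$_{ew}$-algebras are exactly the upward-closed subsets closed under $*$), so this argument covers the full statement. Equivalently, every variety of FL$_{ew}$-algebras has the CEP, so the hypothesis in the second clause is automatically satisfied in the setting of the theorem as stated here; the CEP proviso is only a genuine restriction in the broader FL context of \cite{gjko}.
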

Now, since every variety of MTL-algebras enjoys the CEP (\cite[page 42]{nog}) then we have
\begin{theorem}
For every variety of MTL-algebras the SDPRP is equivalent to the DPRP. 
\end{theorem}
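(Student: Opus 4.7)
The plan is to derive this as an immediate consequence of \Cref{teo:sdprp} together with the fact (cited just above the statement from \cite[page 42]{nog}) that every variety of MTL-algebras enjoys the congruence extension property. Since every axiomatic extension of MTL is in particular a logic over FL$_{ew}$, \Cref{teo:sdprp} is directly applicable.

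First I would dispose of the easy direction: by the second bullet of \Cref{teo:sdprp}, SDPRP implies DPRP for every logic L over FL$_{ew}$, and hence in particular for every axiomatic extension of MTL. No further argument is needed here.

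For the converse, I would invoke the same bullet of \Cref{teo:sdprp}, which asserts that DPRP implies SDPRP whenever the variety of L-algebras has the CEP. Since the variety of L-algebras is, by assumption, a subvariety of the variety of MTL-algebras, and subvarieties of a variety with CEP may in general fail to inherit it, the key point to emphasize is that the cited result of \cite[page 42]{nog} actually gives CEP for \emph{every} variety of MTL-algebras, not just for the whole variety; this is exactly the hypothesis required. Combining the two directions yields the equivalence.

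Structurally the proof is thus a one-line deduction; the only subtlety worth stating explicitly is the appeal to CEP in the form needed, namely that it holds uniformly across all subvarieties of MTL-algebras, so there is no obstacle beyond citing the right facts in the right order.
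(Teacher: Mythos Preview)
Your argument is correct and matches the paper's own reasoning exactly: the theorem is an immediate consequence of \Cref{teo:sdprp} together with the fact, cited just before the statement, that every variety of MTL-algebras has the CEP. One small inaccuracy worth correcting in your write-up: the CEP \emph{is} automatically inherited by subvarieties (if $\mathcal{A}\leq\mathcal{B}$ lie in a subvariety they also lie in the ambient variety, and the extending congruence on $\mathcal{B}$ is just a congruence of the algebra, independent of the variety), so your caveat about subvarieties possibly failing to inherit it is unnecessary and in fact false.
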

One can ask which is the relation between the DPRP and PRP: a first result, shown in \cite[page 45]{ki} in the context of substructural logics, is the following.
\begin{theorem}[{\cite[page 45]{ki}}]\label{teo:prp}
For every axiomatic extension of MTL, the PRP implies the DPRP.
\end{theorem}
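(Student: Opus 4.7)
The plan is to start from a derivation $\Gamma\vdash_L\psi$ with $\mathrm{Var}(\Gamma)\cap\mathrm{Var}(\psi)=\emptyset$, reduce it via the local deduction theorem to a single implication $\vdash_L\chi\to\psi$ where $\chi$ is built only from variables appearing in $\Gamma$, then feed this implication into PRP.

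More concretely, since proofs are finitary there is a finite subset $\{\gamma_1,\dots,\gamma_k\}\subseteq\Gamma$ with $\{\gamma_1,\dots,\gamma_k\}\vdash_L\psi$. Applying \Cref{teo:ldt} $k$ times, one obtains positive integers $n_1,\dots,n_k$ such that
\begin{equation*}
\vdash_L \gamma_1^{n_1}\to(\gamma_2^{n_2}\to(\cdots\to(\gamma_k^{n_k}\to\psi)\cdots)).
\end{equation*}
Repeated use of the residuation axioms (A7a), (A7b) rewrites this as $\vdash_L\chi\to\psi$, where $\chi\df\gamma_1^{n_1}\&\cdots\&\gamma_k^{n_k}$. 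Crucially, every variable of $\chi$ already occurs in $\Gamma$, so $\chi$ and $\psi$ still share no variables, and PRP applies: either $\vdash_L\psi$ (in which case we are done), or $\vdash_L\neg\chi$, i.e.\ $\vdash_L\chi\to\bot$.

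It then remains to observe that in the second case $\Gamma\vdash_L\chi$, whence $\Gamma\vdash_L\bot$. This is the standard fact that from $\gamma_1,\dots,\gamma_k$ one can derive $\gamma_1^{n_1}\&\cdots\&\gamma_k^{n_k}$ by iterating the $\&$\nobreakdash-adjunction rule, which is itself derivable in MTL from the theorem $\varphi\to(\psi\to(\varphi\&\psi))$ (a direct consequence of (A7b) applied to $\varphi\&\psi\to\varphi\&\psi$). Combining $\Gamma\vdash_L\chi$ with $\vdash_L\chi\to\bot$ via modus ponens yields $\Gamma\vdash_L\bot$, establishing DPRP. No step is truly delicate; the only thing to watch is the bookkeeping showing that the variables of $\chi$ stay inside those of $\Gamma$, so that PRP can be legitimately invoked.
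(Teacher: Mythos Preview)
Your argument is correct. The paper does not actually supply its own proof of this theorem---it is stated with a bare citation to \cite[page 45]{ki}---so there is nothing to compare against beyond noting that your derivation via the local deduction theorem (\Cref{teo:ldt}), residuation, and $\&$-adjunction is the natural and standard route, and it goes through without issue.
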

Moreover
\begin{theorem}[{\cite[Theorem 5.57]{gjko}}]
Every extension of the logic FL$_{ew}$ with the axiom $\neg(\varphi\land\neg\varphi)$ has the PRP.
\end{theorem}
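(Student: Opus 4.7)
I would argue by contraposition through algebraic semantics. Suppose $\varphi,\psi$ have disjoint variable sets and that both $\not\vdash_L\neg\varphi$ and $\not\vdash_L\psi$. The goal is to exhibit an L-algebra and an evaluation on which $\varphi\to\psi$ does not take the value $1$, which by algebraic completeness of L w.r.t.\ its variety $\mathbb{L}$ would contradict $\vdash_L\varphi\to\psi$.

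From the two non-derivations, completeness produces L-algebras $\mathcal{A},\mathcal{B}$ and evaluations $v_A,v_B$ with $v_A(\varphi)\neq 0$ and $v_B(\psi)\neq 1$. Since $\mathbb{L}$ is a variety the direct product $\mathcal{A}\times\mathcal{B}$ lies in $\mathbb{L}$. The natural combined evaluation exploiting the disjointness of the variables is $v(x)=(v_A(x),1_\mathcal{B})$ for each variable $x$ of $\varphi$ and $v(y)=(1_\mathcal{A},v_B(y))$ for each variable $y$ of $\psi$. A routine induction on formulas then yields $v(\varphi)=(v_A(\varphi),\varphi^{\,1}_\mathcal{B})$ and $v(\psi)=(\psi^{\,1}_\mathcal{A},v_B(\psi))$, where the superscript $1$ denotes evaluation obtained by replacing every variable by the top element of the indicated algebra.

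The main obstacle is that this naive product evaluation is insufficient when $\varphi$ or $\psi$ contains $\bot$: for instance $(\neg x)^{\,1}$ collapses to $0$, so the resulting second-coordinate constraint $\varphi^{\,1}_\mathcal{B}\leq v_B(\psi)$ can be met trivially and yields no contradiction. This is precisely the step at which the hypothesis $\neg(\varphi\land\neg\varphi)$ is needed: it forces every L-algebra to be pseudo-complemented, i.e.\ $x\sqcap\neg x=0$. I would exploit pseudo-complementation to refine the construction---for example by replacing the ``opposite coordinate'' values $1_\mathcal{B},1_\mathcal{A}$ by $\neg_\mathcal{B}v_B(\psi)$ and $\neg_\mathcal{A}v_A(\varphi)$, so that the axiom forces the cross-terms to vanish under meet---and thereby obtain $v(\varphi\to\psi)\neq 1_{\mathcal{A}\times\mathcal{B}}$. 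Algebraic completeness then delivers $\not\vdash_L\varphi\to\psi$, contradicting the hypothesis and completing the proof of the contrapositive.
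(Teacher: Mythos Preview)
The paper does not give its own proof of this statement; it is quoted from \cite{gjko} without argument. There is therefore nothing in the paper to compare your attempt against, and I evaluate your sketch on its own merits.

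Your setup is sound: contraposition, countermodels $\mathcal A,\mathcal B$ with $a:=v_A(\varphi)\neq 0$ and $b:=v_B(\psi)\neq 1$, and a mixed evaluation on $\mathcal A\times\mathcal B$ exploiting the disjointness of the variable sets. You also correctly isolate the obstacle created by the presence of $\bot$. The gap is at the decisive last step. With the fillers you propose one obtains
\[
v(\varphi)=\bigl(a,\ \varphi^{\neg b}_{\mathcal B}\bigr),\qquad v(\psi)=\bigl(\psi^{\neg a}_{\mathcal A},\ b\bigr),
\]
and what is needed is $a\not\le\psi^{\neg a}_{\mathcal A}$ or $\varphi^{\neg b}_{\mathcal B}\not\le b$. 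Pseudo-complementation does not yield either inequality in general, and the phrase ``the axiom forces the cross-terms to vanish under meet'' is not substantiated. Concretely, take L to be G\"odel logic (an extension of FL$_{ew}$ satisfying $\neg(\alpha\wedge\neg\alpha)$), let $\varphi=x$, $\psi=y\vee\neg y$, and $\mathcal A=\mathcal B=G_3$ with $v_A(x)=v_B(y)$ equal to the middle element $m$. Then $a=m>0$, $b=m<1$, $\neg m=0$, and your evaluation gives $v(x)=(m,0)$, $v(y)=(0,m)$, hence
\[
v(\varphi)=(m,0),\qquad v(\psi)=(0,m)\sqcup(1,0)=(1,m),
\]
so $v(\varphi)\le v(\psi)$ and the construction fails to refute $\varphi\to\psi$, even though $\not\vdash_{G}x\to(y\vee\neg y)$ is immediate by a direct evaluation in $G_3$. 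Thus your particular refinement does not work, and since the whole weight of the theorem rests on exactly this step, the sketch as written is not a proof. A correct argument must make an essential, not merely decorative, use of the axiom; choosing fixed filler constants in a single product is not enough.
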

Hence
\begin{corollary}\label{cor:prp}
Every axiomatic extension of SMTL enjoys the PRP and the SDPRP.
\end{corollary}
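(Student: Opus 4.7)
The plan is to obtain both conclusions by chaining the three results that immediately precede the corollary, with no genuinely new argument required. First I would observe that, by definition, SMTL is axiomatized as MTL (equivalently, as FL$_{ew}$ plus prelinearity) together with the axiom $\neg(\varphi\land\neg\varphi)$. Hence every axiomatic extension L of SMTL is, in particular, an axiomatic extension of FL$_{ew}$ containing $\neg(\varphi\land\neg\varphi)$. The theorem stated just before the corollary (\cite[Theorem 5.57]{gjko}) then applies verbatim to L and delivers the PRP.

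Next, I would pass from PRP to DPRP by invoking \Cref{teo:prp}, which asserts exactly this implication in the setting of axiomatic extensions of MTL. So L enjoys the DPRP. To upgrade DPRP to SDPRP, I would use the fact recalled just after \Cref{teo:sdprp}: in every variety of MTL-algebras the CEP holds (\cite[page 42]{nog}), and \Cref{teo:sdprp} tells us that the CEP makes DPRP equivalent to SDPRP. Since L is an extension of MTL, its variety still enjoys the CEP, so from DPRP we conclude SDPRP.

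There is no real obstacle here: the corollary is a formal consequence of already established theorems, and the only thing one has to verify is that the hypotheses of each invoked result are met by an arbitrary axiomatic extension of SMTL. This is immediate from the axiomatization of SMTL and from the fact that the class of axiomatic extensions of MTL is closed under further axiomatic extension, so both the availability of $\neg(\varphi\land\neg\varphi)$ and the CEP transfer automatically to L.
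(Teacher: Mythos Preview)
Your proposal is correct and matches the paper's intended argument: the corollary is stated right after the word ``Hence'' with no separate proof, precisely because it is the chain you describe---PRP from \cite[Theorem~5.57]{gjko}, then PRP $\Rightarrow$ DPRP by \Cref{teo:prp}, then DPRP $\Leftrightarrow$ SDPRP via the CEP for MTL varieties.
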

This result can be strengthened. 
\begin{lemma}\label{lem:mtl}
The formula $(\varphi\land\neg\varphi)\to(\psi\vee\neg\psi)$ is a theorem of MTL.
\end{lemma}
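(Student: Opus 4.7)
The plan is to exploit chain completeness of MTL: by the completeness theorem of \cite{eg}, it suffices to check that the formula $(\varphi \land \neg\varphi) \to (\psi \vee \neg\psi)$ evaluates to $1$ in every MTL-chain under every assignment. I fix an MTL-chain $\mathcal{A}$, let $x, y$ be the values of $\varphi, \psi$, and recall that on a chain both $\land$ and $\vee$ are interpreted as $\min$ and $\max$ respectively (for $\vee$ this is a standard consequence of prelinearity plus residuation). The task thus reduces to proving the inequality $\min(x, x \Rightarrow 0) \leq \max(y, y \Rightarrow 0)$ for all $x, y \in \mathcal{A}$.

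The heart of the argument is a short auxiliary observation: setting $a \df \min(x, x \Rightarrow 0)$, one has $a \leq a \Rightarrow 0$. Indeed, from $a \leq x$ and $a \leq x \Rightarrow 0$, together with monotonicity of $*$, we get $a * a \leq x * (x \Rightarrow 0) \leq 0$, where the last inequality is immediate from residuation. Hence $a * a = 0$, which is equivalent to $a \leq a \Rightarrow 0$.

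A short case split then concludes. If $y \geq a$, then $\max(y, y \Rightarrow 0) \geq y \geq a$. If instead $y < a$, then by antitonicity of $z \mapsto z \Rightarrow 0$ we obtain $y \Rightarrow 0 \geq a \Rightarrow 0 \geq a$, so again $\max(y, y \Rightarrow 0) \geq a$. In either case $\min(x, x \Rightarrow 0) \leq \max(y, y \Rightarrow 0)$, as needed. I do not anticipate a serious obstacle; the only point worth being explicit about is the identification of $\vee$ with $\max$ on chains, which is standard in the MTL literature. A fully syntactic derivation from the axioms of MTL would also be possible, but substantially more tedious than this semantic route.
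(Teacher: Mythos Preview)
Your proof is correct and, like the paper's, proceeds semantically via chain completeness and the antitonicity of negation. The paper's argument, however, is slightly more direct: it argues by contradiction, observing that if $\min(x,\sim x) > \max(y,\sim y)$ then in particular $x > y$ and $\sim x > \sim y$, which immediately clashes with the antitonicity of $\sim$. Your route reaches the same conclusion but through an extra auxiliary step, namely the observation that $a \df \min(x,\sim x)$ satisfies $a*a = 0$ (equivalently $a \leq \sim a$); this is correct and pleasant in its own right, but not strictly needed, since the contradiction version bypasses it. Either way the substance is the same: chain completeness plus antitonicity of negation.
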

\begin{proof}
Suppose not: it follows that there is an MTL-chain with two elements $x,y$ such that $\min(x,\sim x)>\max(y,\sim y)$. However this is a contradiction, since $x>y$ implies that $\sim x \leq \sim y$.
\end{proof}
\begin{theorem}
Let L be an axiomatic extension of MTL: then L enjoys the PRP if and only if it is an extension of SMTL.
\end{theorem}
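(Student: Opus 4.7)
The right-to-left direction is immediate from \Cref{cor:prp}: if L is an axiomatic extension of SMTL then L satisfies the axiom $\neg(\varphi\land\neg\varphi)$ and hence enjoys the PRP.

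For the converse, assume L enjoys the PRP. My plan is to apply the PRP to a concrete instance supplied by \Cref{lem:mtl}. Let $x, y$ be two distinct propositional variables. By \Cref{lem:mtl}, the formula $(x\land\neg x)\to(y\vee\neg y)$ is a theorem of MTL, hence of L. Its two immediate subformulas share no variables, so the PRP yields the dichotomy: either $\vdash_L \neg(x\land\neg x)$, or $\vdash_L y\vee\neg y$.

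In the first case we are done at once: the schema $\neg(\varphi\land\neg\varphi)$ follows from $\vdash_L \neg(x\land\neg x)$ by substitution, so L extends SMTL. In the second case, I would invoke completeness of L with respect to the class of L-chains: $\vdash_L y\vee\neg y$ forces $a\sqcup\sim\! a=1$ to hold in every L-chain $\mathcal{A}$, but in a totally ordered MTL-algebra this means $a=1$ or $\sim\! a=1$, i.e.\ $a\in\{0,1\}$. Hence every L-chain reduces to the two-element Boolean algebra $\mathbf{2}$, in which $\sim(a\sqcap\sim\! a)=1$ holds trivially; by completeness $\vdash_L \neg(x\land\neg x)$ and L again extends SMTL.

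There is no real obstacle here — the argument is a straightforward case split enabled by \Cref{lem:mtl}. The only mild subtlety is recognising that the second horn of the PRP dichotomy collapses the semantics of L to $\mathbf{2}$, and that $\mathbf{2}$ is itself an SMTL-algebra, so both horns end at the same conclusion.
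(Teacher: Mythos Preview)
Your proof is correct and uses the same key ingredients as the paper: \Cref{cor:prp} for one direction and the test formulas $\varphi=x\land\neg x$, $\psi=y\vee\neg y$ from \Cref{lem:mtl} for the other. The only structural difference is that the paper argues by contrapositive (if L is not over SMTL, a non-trivial zero divisor in some L-chain simultaneously refutes $\neg\varphi$ and $\psi$, so PRP fails), thereby avoiding your second case; your direct argument handles that case correctly by observing that $\vdash_L y\vee\neg y$ forces every L-chain to be $\mathbf{2}$.
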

\begin{proof}
If L is an extension of SMTL, then the result follows from \Cref{cor:prp}. 

Suppose now that L is not an extension of SMTL: it follows that $\not\vdash_L\neg(\varphi\land\neg\varphi)$ and hence there is an L-chain $\mathcal{A}$ with an element $a>0$ such that $\sim a>0$ (i.e. $a$ is a non trivial zero divisor). Take $\varphi\df(x\land\neg x)$ and $\psi\df(y\vee\neg y)$: thanks to \Cref{lem:mtl} we have that $\vdash_L\varphi\to\psi$, whilst $\not\models_\mathcal{A}\neg\varphi,\, \not\models_\mathcal{A}\psi$, and hence $\not\vdash_L\neg\varphi,\, \not\vdash_L\psi$. Hence L does not have the PRP.
\end{proof}
Moreover, observe that the converse of \Cref{teo:prp} does not hold, in general.
\begin{counter}\label{counter:prp}
Consider \L ukasiewicz logic (\L) and take $\varphi\df x\land \neg x,\, \psi\df y\vee \neg y$. Clearly $\vdash_{\text{\L}}\varphi\to\psi$, but $\not\vdash_{\text{\L}}\neg\varphi,\, \not\vdash_{\text{\L}}\psi$ (this can be easily checked over the standard MV-algebra). However \L ukasiewicz logic enjoys the DPRP, since the variety of MV-algebras enjoys the CJEP (see \cite{monchain}): indeed $\varphi\vdash_{\text{\L}}\psi$ and $\varphi\vdash_{\text{\L}}\bot$.

\noindent Note that this counterexample also applies to Nilpotent Minimum logic (\cite{eg}). 
\end{counter}
As we have pointed out in \cref{counter:prp}, the CJEP implies the DPRP. The following theorem shows the relation between CJEP and the other properties:
\begin{theorem}
Let L be an extension of MTL. 
\begin{itemize}
\item If the variety of L-algebras has the CJEP then L enjoys the SSCC, HC, DMVP, SDPRP. 
\item If L enjoys the DMVP then L enjoys also the DPRP.
\end{itemize}
\end{theorem}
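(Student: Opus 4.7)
The plan is to observe that most of this theorem is really an assembly of results already established in the paper, and that the only genuinely new step is the implication from DMVP to DPRP. I would split the proof into the four implications of the first bullet and then handle the second bullet separately.

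For the first bullet, the implication CJEP $\Rightarrow$ SSCC is immediate from the theorem characterising SSCC as CJEP. Since SSCC trivially yields SCC, and SCC is equivalent to HC by \Cref{cor:scchc}, we also obtain HC. The implication CJEP $\Rightarrow$ DMVP is precisely the theorem stated just before the current one. For CJEP $\Rightarrow$ SDPRP, I would invoke \Cref{teo:sdprp}: it suffices to check that every pair of subdirectly irreducible L-algebras embeds jointly into some L-algebra. But every subdirectly irreducible MTL-algebra is an MTL-chain, so any two subdirectly irreducible L-algebras are L-chains, and CJEP supplies a common L-chain into which they embed; this chain is in particular an L-algebra.

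For the second bullet, the key observation is that in MTL the implication $\psi\to(\bot\vee\psi)$ is a theorem (unfolding the definition of $\vee$ and using that $\bot\to\psi$ and $\psi\to\neg\neg\psi$ are MTL-theorems). Assume DMVP and suppose $\Gamma\vdash_L\psi$ with $\Gamma$ and $\psi$ sharing no variables. Then $\Gamma\vdash_L\bot\vee\psi$ as well. I would now apply DMVP with $\Sigma\df\emptyset$ and $\varphi\df\bot$: the sets $\Gamma\cup\{\bot\}$ and $\{\psi\}$ share no variables, because $\bot$ is variable-free and $\Gamma,\psi$ already share none. DMVP therefore yields $\Gamma\vdash_L\bot$ or $\vdash_L\psi$, which is exactly the conclusion required for DPRP.

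Since every implication in the first bullet is a direct citation of an earlier theorem, and the second bullet reduces to a one-line manipulation with the constant $\bot$ playing the role of the separated formula, no real obstacle is expected; one only needs to verify that $\bot$ indeed contains no variables and that the DMVP remains meaningful with an empty side set, both of which are immediate from the formulations.
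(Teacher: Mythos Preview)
Your argument is correct. The paper's own proof is a single line (``an easy check from the previous results''), and for the first bullet your decomposition is exactly the intended one: each implication is a direct citation of an earlier theorem.

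For the second bullet you take a route that differs from what the paper's phrasing suggests. The natural reading of ``previous results'' is the algebraic chain: by \Cref{teo:dmvp}, DMVP means every pair of subdirectly irreducible L-algebras embeds jointly into a subdirectly irreducible L-algebra, hence a fortiori into \emph{some} L-algebra, which by \Cref{teo:sdprp} is SDPRP; and SDPRP is equivalent to DPRP over MTL thanks to the CEP. Your argument instead works purely syntactically, exploiting that $\bot$ is variable-free so that DMVP can be instantiated with $\varphi=\bot$ and $\Sigma=\emptyset$. Both are short; yours has the advantage of being entirely elementary and independent of the algebraic characterizations, while the paper's route additionally yields the intermediate fact DMVP $\Rightarrow$ SDPRP (which is what \Cref{fig:1} actually records).
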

\begin{proof}
An easy check from the previous results.
\end{proof}
We now introduce another algebraic property.
\begin{definition}
We say that a variety $K$ of MTL-algebras has the \emph{amalgamation property} (AP) if for every tuple $\lag \mathcal{A}, \mathcal{B}, \mathcal{C}, i, j\rog$, where $\mathcal{A}, \mathcal{B}, \mathcal{C}\in K$ and $\mathcal{A}\xhookrightarrow{i} \mathcal{B}$, $\mathcal{A}\xhookrightarrow{j} \mathcal{C}$, there is a tuple $\lag \mathcal{D},h,k\rog$, with $\mathcal{D}\in K$, $\mathcal{B}\xhookrightarrow{h} \mathcal{D}$, $\mathcal{C}\xhookrightarrow{k} \mathcal{D}$, such that $h\circ i=k\circ j$.
\end{definition}
An easy check shows that the AP implies the DPRP, thanks to \Cref{teo:sdprp}.
 
There is, moreover, a logical property that is strictly connected to the AP.
\begin{definition}\label{def:dip}
A logic L has the \emph{deductive interpolation property} (DIP) if for any theory $\Gamma$ and for any formula $\psi$ of L, if $\Gamma \vdash _{L}\psi $, then there is a formula $\gamma $ such that $\Gamma \vdash _{L}\gamma $, $\gamma \vdash_{L}\psi $ and every propositional variable occurring in $\gamma $ occurs both in $\Gamma$ and in $\psi $.
\end{definition}
As shown in \cite{gjko} (see also \cite[Theorem 5.8]{go}) DIP and AP are equivalent:
\begin{theorem}[\cite{gjko}]\label{teo:nmdip}
An axiomatic extension of MTL enjoys the DIP iff the corresponding variety has the AP.
\end{theorem}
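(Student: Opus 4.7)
The plan is to use that every axiomatic extension $L$ of MTL is algebraizable with respect to its variety $\mathbb{L}$ under the identification $\varphi\leftrightarrow(\varphi\approx 1)$, so that DIP and AP can be compared via a common algebraic reformulation. I would prove both directions by working directly inside $\mathbb{L}$ with free algebras, exploiting that every variety of MTL-algebras enjoys the CEP (already quoted from \cite[page 42]{nog}).

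For AP $\Rightarrow$ DIP, assume $\Gamma\vdash_L\psi$ and let $V_1, V_2$ be the variables of $\Gamma$ and $\psi$, with $V\df V_1\cap V_2$. Let $\theta_1$ be the congruence on the free $\mathbb{L}$-algebra $\mathcal{F}(V_1)$ generated by $\{(\gamma,1):\gamma\in\Gamma\}$, set $\theta\df\theta_1\cap\mathcal{F}(V)^2$, and let $\theta_2$ be the congruence on $\mathcal{F}(V_2)$ generated by $\theta$. Take $\mathcal{A}\df\mathcal{F}(V)/\theta$, $\mathcal{B}\df\mathcal{F}(V_1)/\theta_1$ and $\mathcal{C}\df\mathcal{F}(V_2)/\theta_2$: the CEP ensures $\theta_2\cap\mathcal{F}(V)^2=\theta$, so that the inclusions of variable sets induce genuine embeddings $\mathcal{A}\xhookrightarrow{i}\mathcal{B}$ and $\mathcal{A}\xhookrightarrow{j}\mathcal{C}$. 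Applying AP yields a common extension $\mathcal{D}$, and since $\mathcal{D}$ contains $\mathcal{B}$ it satisfies $\gamma\approx 1$ for every $\gamma\in\Gamma$, whence $[\psi]=1$ in $\mathcal{D}$. Injectivity of $\mathcal{C}\hookrightarrow\mathcal{D}$ then forces $(\psi,1)\in\theta_2$, and unfolding the generation of $\theta_2$ from $\theta$ via the local deduction theorem (\Cref{teo:ldt}) produces a formula $\gamma$ in the variables $V$ with $\Gamma\vdash_L\gamma$ and $\gamma\vdash_L\psi$, the required interpolant.

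For DIP $\Rightarrow$ AP, present $\mathcal{B}$ and $\mathcal{C}$ as quotients of free $\mathbb{L}$-algebras on $X_B\supseteq X_A$, $X_C\supseteq X_A$ with $X_B\cap X_C=X_A$ (where $X_A$ generates $\mathcal{A}$), modulo theories $\Sigma_B, \Sigma_C$ that agree on $X_A$ with a common theory $\Sigma_A$. Take as amalgam $\mathcal{D}$ the quotient of the free algebra on $X_B\cup X_C$ by $\Sigma_B\cup\Sigma_C$. The only nontrivial check is injectivity of $\mathcal{B}\to\mathcal{D}$: if a formula $\varphi$ in the variables $X_B$ satisfies $\Sigma_B\cup\Sigma_C\vdash_L\varphi$, then by \Cref{teo:ldt} there are finite conjunctions $\sigma_C$ from $\Sigma_C$, $\sigma_B$ from $\Sigma_B$, and $n\in\mathbb{N}^+$ with $\sigma_C^n\vdash_L\sigma_B^n\to\varphi$. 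Applying DIP to this deduction, whose antecedent lives in $X_A\cup X_C$ and whose consequent lives in $X_A\cup X_B$, yields an interpolant $\gamma$ in $X_A$ with $\sigma_C^n\vdash_L\gamma$ and $\gamma\vdash_L\sigma_B^n\to\varphi$. Then $\Sigma_C\vdash_L\gamma$, so $[\gamma]_{\mathcal{C}}=1$; injectivity of $j$ gives $[\gamma]_{\mathcal{A}}=1$, whence $[\gamma]_{\mathcal{B}}=1$, i.e.\ $\Sigma_B\vdash_L\gamma$, and combining with $\gamma,\Sigma_B\vdash_L\varphi$ yields $\Sigma_B\vdash_L\varphi$, as required. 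A symmetric argument handles $\mathcal{C}\to\mathcal{D}$, and commutativity of the amalgamation square is automatic since both $i$ and $j$ become the canonical inclusion of $X_A$.

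The main obstacle I expect is that MTL enjoys only a local deduction theorem (\Cref{teo:ldt}), which prevents a one-step translation between $\Gamma\vdash_L\varphi$ and a single equation in a free algebra: both directions above are consequently forced to use iterated conjunctions $\sigma^n$ and finite fragments of the hypothesis sets, and the interpolant produced has to be chosen accordingly. Once this bookkeeping is settled, both implications reduce to the standard algebraizability correspondence between deductive interpolation and algebraic amalgamation used in \cite{gjko} and \cite[Theorem 5.8]{go}.
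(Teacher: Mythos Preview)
The paper does not supply its own proof of this theorem: it is quoted verbatim from \cite{gjko} (with a pointer to \cite[Theorem~5.8]{go}), so there is nothing to compare against directly. Your proposal is precisely the standard argument from those references, carried out for MTL: algebraizability to translate $\vdash_L$ into congruences on free $\mathbb{L}$-algebras, CEP to make the restriction maps between quotients of free algebras injective, and the local deduction theorem (\Cref{teo:ldt}) to extract a syntactic interpolant from membership in a generated congruence. Both directions are correct as sketched.

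One spot where the write-up is a bit compressed is the sentence ``since $\mathcal{D}$ contains $\mathcal{B}$ it satisfies $\gamma\approx 1$ for every $\gamma\in\Gamma$, whence $[\psi]=1$ in $\mathcal{D}$'': to make this precise you must specify the evaluation in $\mathcal{D}$ that sends $V_1$-variables through $h$ and $V_2\setminus V_1$-variables through $k$, and then invoke $h\circ i=k\circ j$ to see these two choices coincide on $V$, so that the evaluation is well defined and $e(\psi)=k([\psi]_{\mathcal{C}})$. With that clarification the AP $\Rightarrow$ DIP direction goes through, and your DIP $\Rightarrow$ AP direction (presenting $\mathcal{B},\mathcal{C}$ via their diagrams over disjoint-except-on-$\mathcal{A}$ generating sets and checking injectivity of the canonical maps into the pushout via an interpolant in the $X_A$-variables) is the expected argument.
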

In \Cref{fig:1} are summarized the connections between the various properties hitherto introduced. The ``negative'' arrows follow from \cref{counter:prp} and some results pointed out \cite{monchain}, concerning AP, CJEP and SCC: indeed in \cite{mo06,monchain} it is shown that: 
\begin{itemize}
\item There are subvarieties of BL-algebras enjoying the AP, but for which the SCC does not hold: for example, the join of the varieties of Gödel and Product algebras.
\item There are subvarieties of BL-algebras enjoying the CJEP (and hence the SSCC and SCC), but for which the AP fails to hold: for example, every variety generated by a finite Gödel-chain with more than three elements.
\end{itemize}
These results clarify immediately the relations between AP and SCC, AP and SDPRP, SDPRP and CJEP, DPRP and PRP.
As a consequence we obtain that the SDPRP does not imply the DMVP: if this was true, then by following the arrows in the diagram we would have that the AP implies the SCC: a contradiction.

We now discuss the relation between PRP and SSCC. First of all, the second one does not imply the first-one: \L ukasiewicz logic is a counterexample. Finally, the negative arrow from PRP to SSCC is a consequence of the fact that there are some extensions of SMTL whose corresponding variety does not enjoy the SSCC: for example, as previously noticed, the logic associated to the join of the varieties of Gödel and product algebras, that clearly is an extension of SMTL.
\begin{figure}[h] 
\hspace{1.5 cm}
\xymatrix@C=2.5pc@R=2.5pc{
SCC \ar@<0.5ex>[dr] |\setminus \ar@{<->}[r] & HC \ar@<1ex>[r]^(.4){?}                                    & \ar[l] DMVP \ar[d] \ar@<1ex>[r]^?                   & \ar[l] CJEP \ar@<0.65ex>[dl]\ar@{<->}[r] & SSCC \ar@<0.5ex>[d] |\setminus \\
DIP \ar@{<->}[r] & AP \ar@<1ex>[r] \ar@<0.5ex>[ul] |\setminus                           &        \ar[l] |(.6){\setminus} \ar@<1ex>[u] |\setminus SDPRP \ar@<0.35ex>[ur] |\setminus        \ar@{<->}[r]        &   DPRP\ar@<1ex>[r] |\setminus                               & \ar[l] PRP \ar@<0.5ex>[u] |\setminus
}
\caption{Relations between the properties previously introduced, for axiomatic extensions of MTL.}\label{fig:1}
\end{figure}
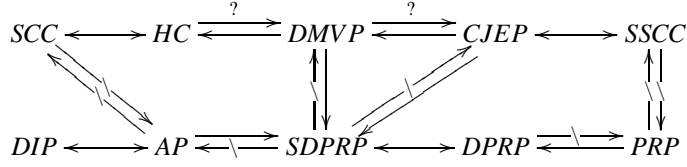
\noindent The last property that we want to discuss is Craig interpolation:
\begin{definition}
Let L be an axiomatic extension of MTL. We say that the \emph{%
Craig interpolation theorem} holds for L iff for any two formulas $\varphi $ and $\psi $
of L, if $\vdash _{L}\varphi \rightarrow \psi $, then there is a formula $%
\gamma $ such that $\vdash _{L}\varphi \rightarrow \gamma $, $\vdash _{L}\gamma
\rightarrow \psi $ and every propositional variable occurring in $\gamma$
occurs both in $\varphi$ and in $\psi$.
\end{definition}
This property, however, fails for many axiomatic extensions of MTL: indeed in \cite{mo06} it is shown that this property holds only for G, G$_3$ and classical logic, among the axiomatic extensions of BL. 

Nevertheless, for the $n$-contractive axiomatic extensions of MTL that enjoys the DIP, we can obtain a weaker form of Craig's theorem (a generalization of the theorem given in \cite{bln} for some families of $n$-contractive extensions of BL).
\begin{theorem}[Weak Craig interpolation theorem]
Let L be an $n$-contractive extension of MTL that enjoys the DIP. For every pair of formulas $\varphi, \psi$, if $\vdash_\text{L}\varphi^{n}\rightarrow \psi $, then there is a formula $\gamma $ such that $\vdash_\text{L}\varphi ^{n}\rightarrow \gamma $, $\vdash_\text{L}\gamma^{n} \rightarrow \psi $ and every propositional variable occurring in $\gamma $ occurs both in $\varphi $ and in $\psi $.
\end{theorem}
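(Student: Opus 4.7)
The plan is to reduce the statement to a direct application of the DIP by translating the implicational hypothesis into a consequence statement via the global deduction theorem for $n$-contractive extensions of MTL (\Cref{teo:conldt}), and then translating the two DIP-outputs back into implications via the same theorem.

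More precisely, I would proceed as follows. First, assume $\vdash_{L}\varphi^{n}\to\psi$. Applying \Cref{teo:conldt} with the empty theory $\Gamma=\emptyset$ gives immediately $\{\varphi\}\vdash_{L}\psi$. Next, invoke the DIP (\Cref{def:dip}) with theory $\{\varphi\}$ and conclusion $\psi$: this yields a formula $\gamma$ whose propositional variables occur both in $\varphi$ and in $\psi$, and such that
\begin{equation*}
\varphi\vdash_{L}\gamma\qquad\text{and}\qquad\gamma\vdash_{L}\psi.
\end{equation*}
Finally, apply \Cref{teo:conldt} twice in the reverse direction, again with $\Gamma=\emptyset$: from $\varphi\vdash_{L}\gamma$ we obtain $\vdash_{L}\varphi^{n}\to\gamma$, and from $\gamma\vdash_{L}\psi$ we obtain $\vdash_{L}\gamma^{n}\to\psi$. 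Together with the variable-sharing condition inherited from the DIP, this is exactly the interpolant required by the statement.

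There is essentially no hard step: the content of the theorem is the observation that $n$-contractivity upgrades the local deduction theorem (\Cref{teo:ldt}) to the global form (\Cref{teo:conldt}), which converts the implication-based interpolation demand into the deduction-based one covered by the DIP. The only point worth checking carefully is that \Cref{teo:conldt} is being applied with a single-formula theory on the left (so that the exponent $n$ appearing in the statement of the weak Craig property matches the contractivity exponent), and that the DIP quantifies over arbitrary theories $\Gamma$, so in particular over $\Gamma=\{\varphi\}$; both are immediate from the statements given.
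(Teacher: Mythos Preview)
Your argument is correct and is exactly the approach the paper indicates: it records the proof as ``an easy consequence of \Cref{teo:conldt} and \Cref{def:dip},'' and your three-step translation (implication $\Rightarrow$ consequence via \Cref{teo:conldt}, apply DIP, then consequence $\Rightarrow$ implication via \Cref{teo:conldt} twice) is precisely that unfolding.
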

\begin{proof}
An easy consequence of \Cref{teo:conldt} and  \Cref{def:dip}.
\end{proof}
\section{Conclusions and discussion of the open problems}
One of the main problems (in the propositional case) left open in \cite{monchain} was:
\begin{problem}\label{prob:3}
Let L be an axiomatic extension of MTL enjoying the SCC: does L enjoy the SSCC ?
\end{problem}
Even if we do not have solved it, note that this problem is connected with \cref{prob:2} and \cref{prob:dmvp} (the ``interrogative'' arrows in \Cref{fig:1}). Indeed if a logic enjoys the HC but not the DMVP, then the SCC holds, but the SSCC fails; the same if a logic enjoys the DMVP, but not the CJEP. Hence, a negative answer to one of these two problems will necessary involve a negative answer to \cref{prob:3}.

As explained in \cite[page 163]{monchain} another open problem concerns the SCC for MTL, IMTL, $\Pi$MTL: this is still unsolved, but our \Cref{teo:mtlhc} could help to get an intuition towards a solution, since it provides some equivalent characterizations for the SCC. 

Future directions of research will concern these open problems, but also the first-order case. Indeed, in this paper we have completely overlooked the properties of SCC and SSCC for the first-order extensions of MTL: this has been done not because these problems are poorly relevant, but because the situation is much more complicate than in the propositional case. For example in \cite{monchain}, differently from the propositional case, for the SSCC in the first-order case it has been found only a sufficient (and not necessary) condition. Moreover there are many extensions of MTL enjoying the SSCC in the propositional case, but not in the first-order case.
\bibliography{MTLlogalg}

\newcommand{\etalchar}[1]{$^{#1}$}
\providecommand{\bysame}{\leavevmode\hbox to3em{\hrulefill}\thinspace}
\providecommand{\MR}{\relax\ifhmode\unskip\space\fi MR }
\providecommand{\MRhref}[2]{%
  \href{http://www.ams.org/mathscinet-getitem?mr=#1}{#2}
}
\providecommand{\href}[2]{#2}
\begin{thebibliography}{EGGM02}

\bibitem[BM11]{bln}
M.~Bianchi and F.~Montagna, \emph{{$n$-contractive BL-logics}}, Arch. Math.
  Log. \textbf{50} (2011), no.~3-4, 257--285,
  \href{http://dx.doi.org/10.1007/s00153-010-0213-8}{doi:10.1007/s00153-010-0213-8}.

\bibitem[BP89]{bp}
W.~Blok and D.~Pigozzi, \emph{Algebraizable logics}, vol.~77, Memoirs of The
  American Mathematical Society, no. 396, American Mathematical Society, 1989,
  ISBN:0-8218-2459-7 - Available on
  \url{http://orion.math.iastate.edu/dpigozzi/}.

\bibitem[CEG{\etalchar{+}}09]{dist}
P.~Cintula, F.~Esteva, J.~Gispert, L.~Godo, F.~Montagna, and C.~Noguera,
  \emph{Distinguished algebraic semantics for t-norm based fuzzy logics:
  methods and algebraic equivalencies}, Ann. Pure Appl. Log. \textbf{160}
  (2009), no.~1, 53--81,
  \href{http://dx.doi.org/10.1016/j.apal.2009.01.012}{doi:10.1016/j.apal.2009.01.012}.

\bibitem[CEGT00]{cegt}
R.~Cignoli, F.~Esteva, L.~Godo, and A.~Torrens, \emph{{Basic Fuzzy Logic is the
  logic of continuous t-norms and their residua}}, Soft Comput. \textbf{4}
  (2000), no.~2, 106--112,
  \href{http://dx.doi.org/10.1007/s005000000044}{doi:10.1007/s005000000044}.

\bibitem[Cin04]{ci}
P.~Cintula, \emph{From fuzzy logic to fuzzy mathematics}, Ph.D. thesis, FNSPE
  CTU, Department of Mathematics, Faculty of Nuclear Sciences and Physical
  Engineering - Czech Technical University in Prague, Trojanova 13, 120 00
  Prague 2, Czech Republic, 2004, Available on
  \url{http://www2.cs.cas.cz/~cintula/thesis.pdf}.

\bibitem[CZ91]{cz}
A.~Chagrov and M.~Zakharyaschev, \emph{{The Disjunction Property of
  Intermediate Propositional Logics}}, Studia Logica \textbf{50} (1991), no.~2,
  189--216,
  \href{http://dx.doi.org/10.1007/BF00370182}{doi:10.1007/BF00370182}.

\bibitem[EG01]{eg}
F.~Esteva and L.~Godo, \emph{{Monoidal t-norm based logic: Towards a logic for
  left-continuous t-norms}}, Fuzzy Sets Syst. \textbf{124} (2001), no.~3,
  271--288,
  \href{http://dx.doi.org/10.1016/S0165-0114(01)00098-7}{doi:10.1016/S0165-0114(01)00098-7}.

\bibitem[EGGM02]{rat}
F.~Esteva, J.~Gispert, L.~Godo, and F.~Montagna, \emph{{On the Standard and
  Rational Completeness of some Axiomatic Extensions of the Monoidal T-norm
  Logic}}, Studia Logica \textbf{71} (2002), no.~2, 199--226,
  \href{http://dx.doi.org/10.1023/A:1016548805869}{doi:10.1023/A:1016548805869}.

\bibitem[GJKO07]{gjko}
N.~Galatos, P.~Jipsen, T.~Kowalski, and H.~Ono, \emph{Residuated lattices: An
  algebraic glimpse at substructural logics}, Studies in Logic and The
  Foundations of Mathematics, vol. 151, Elsevier, 2007, ISBN:978-0-444-52141-5.

\bibitem[GO06]{go}
N.~Galatos and H.~Ono, \emph{{Algebraization, Parametrized Local Deduction
  Theorem and Interpolation for Substructural Logics over FL}}, Studia Logica
  \textbf{83} (2006), no.~1-3, 279--308,
  \href{http://dx.doi.org/10.1007/s11225-006-8305-5}{doi:10.1007/s11225-006-8305-5}.

\bibitem[Gri77]{grig}
R.~Grigolia, \emph{Algebraic analysis of {{\L}}ukasiewicz-tarski n-valued
  logical systems}, Selected Papers on {{\L}}ukasiewicz Sentencial Calculi,
  Polish Academy of Science, Ossolineum, 1977, pp.~81--91.

\bibitem[Háj98]{haj}
P.~Hájek, \emph{Metamathematics of fuzzy logic}, paperback ed., Trends in
  Logic, vol.~4, Kluwer Academic Publishers, 1998,
  \href{http://www.springer.com/philosophy/logic/book/978-1-4020-0370-7}{ISBN:9781402003707}.

\bibitem[HNP07]{hnp}
R.~Hor\v{c}\'{i}k, C.~Noguera, and M.~Petr\'ik, \emph{{On $n$-contractive fuzzy
  logics}}, Math. Log. Q. \textbf{53} (2007), no.~3, 268--288,
  \href{http://dx.doi.org/10.1002/malq.200610044}{doi:10.1002/malq.200610044}.

\bibitem[Kih06]{ki}
H.~Kihara, \emph{{Commutative Substructural Logics - an algebraic study}},
  Ph.D. thesis, School of Information Science - JAIST, 2006, Available on
  \url{https://dspace.jaist.ac.jp/dspace/bitstream/10119/971/3/2669paper.pdf}.

\bibitem[MNH06]{wcmtl}
F.~Montagna, C.~Noguera, and R.~Hor\v{c}\'{i}k, \emph{{On Weakly Cancellative
  Fuzzy Logics}}, J. Logic Comput. \textbf{16} (2006), no.~4, 423--450,
  \href{http://dx.doi.org/10.1093/logcom/exl002}{doi:10.1093/logcom/exl002}.

\bibitem[Mon06]{mo06}
F.~Montagna, \emph{{Interpolation and Beth's property in propositional
  many-valued logics: A semantic investigation}}, Ann. Pure. Appl. Log.
  \textbf{141} (2006), no.~1-2, 148--179,
  \href{http://dx.doi.org/10.1016/j.apal.2005.11.001}{doi:10.1016/j.apal.2005.11.001}.

\bibitem[Mon11]{monchain}
\bysame, \emph{{Completeness with respect to a chain and universal models in
  fuzzy logic}}, Arch. Math. Log. \textbf{50} (2011), no.~1-2, 161--183,
  \href{http://dx.doi.org/10.1007/s00153-010-0207-6}{doi:10.1007/s00153-010-0207-6}.

\bibitem[Nog06]{nog}
C.~Noguera, \emph{{Algebraic study of axiomatic extensions of triangular norm
  based fuzzy logics}}, Ph.D. thesis, IIIA-CSIC, 2006, Available on
  \url{http://www.carlesnoguera.cat/files/NogueraPhDThesis.pdf}.

\bibitem[Ono10]{on}
H.~Ono, \emph{{Logics without the contraction rule and residuated lattices}},
  Australas. J. Log. \textbf{8} (2010), 50--81, Available on
  \url{http://philosophy.unimelb.edu.au/ajl/2010/2010_4.pdf}.

\bibitem[Wan07]{rdp}
S.~Wang, \emph{{A fuzzy logic for the revised drastic product \emph{t}-norm }},
  Soft Comput. \textbf{11} (2007), no.~6, 585--590,
  \href{http://dx.doi.org/10.1007/s00500-005-0024-8}{doi:10.1007/s00500-005-0024-8}.

\end{thebibliography}
\bibliographystyle{amsalpha}
\end{document}